\documentclass[oneside,11pt]{amsart}

\makeatletter
\@namedef{subjclassname@2020}{\textup{2020} Mathematics Subject Classification}
\makeatother

\usepackage[a4paper,width=140mm,top=27mm,bottom=27mm]{geometry}
\usepackage{amsmath,amssymb,mathtools}
\usepackage[hidelinks]{hyperref}
\usepackage{xcolor}
\usepackage{enumitem}
\usepackage{mathrsfs}

\newtheorem{theorem}{Theorem}[section]
\newtheorem{lemma}[theorem]{Lemma}

\theoremstyle{remark}
\newtheorem{remark}[theorem]{Remark}

\newcommand{\R}{\mathbb R}
\newcommand{\T}{\mathbb T}

\newcommand{\dd}{\,\mathrm d}
\newcommand{\whM}{\widehat M}
\newcommand{\whK}{\widehat K}

\newcommand{\norm}[2][]{\left\|#2\right\|_{#1}}

\numberwithin{equation}{section}

\begin{document}
\address{Yongming Luo
\newline \indent
Faculty of Computational Mathematics and Cybernetics
\newline \indent Shenzhen MSU-BIT University, China}
\email{luo.yongming@smbu.edu.cn}

\title[Sharp mass-threshold for Dancer-type solutions]
{Sharp mass-threshold for Dancer-type solutions of the focusing mass-critical NLS on $\R^d\times\T$}
\author{Yongming Luo}

\begin{abstract}
The mass-critical NLS on Euclidean space $\R^d$ exhibits a strong mass rigidity: all positive ground states are generated from a single profile and have the same ground state mass $\widehat{M}(Q)$.  By appealing to bifurcation methods, Dancer constructed in his seminar paper \cite{DancerSolution} solutions to the corresponding equation on $\R^d\times\T$ which decay in the noncompact directions and are nontrivially periodic in one direction.  Such bifurcation approach, however, does not provide any energetic characterization of the solutions, and in particular does not explain their relation to the Euclidean ground-states.  By introducing a new strict monotonicity mechanism for the prescribed-mass energy level, combining the semivirial-vanishing geometry framework developed in author's recent work, we prove that for any mass $c\in(0,2\pi\widehat{M}(Q))$ the semivirial-vanishing variational problem $m_c$ admits a normalized Dancer-type optimizer which also solves the focusing mass-critical NLS on $\R^d\times\T$. This also gives a sharp complement for the existence results deduced in our earlier work \cite{Luo_LegendreFenchel} via the Legendre-Fenchel duality.
\end{abstract}

\keywords{Mass-critical NLS, Dancer-type solutions, waveguide manifolds, semivirial-vanishing geometry}
\subjclass[2020]{35Q55, 35A15, 35B40, 49J40}

\maketitle

\section{Introduction}

This paper is concerned with the prescribed-mass construction of standing wave solutions
for the focusing nonlinear Schr\"odinger equation (NLS)
\begin{equation}\label{eq:standing-wave-intro}
        -\Delta_{x,y}u+\omega u=u^{1+4/d},\quad \omega>0
\end{equation}
on the waveguide manifold
\[
        \R^d_x\times\T_y,
        \qquad
        \T=\R/2\pi\mathbb Z.
\]
The nonlinearity has order \(4/d\) and hence the equation is mass-critical with respect to the Euclidean variable \(x\in\R^d\), while the underlying
space contains an additional periodic direction.  By simply neglecting the periodic direction, \eqref{eq:standing-wave-intro} reduces to the standard focusing mass-critical NLS on the Euclidean space \(\R^d\), whose solutions are unique up to the standard symmetries, and their mass is fixed.  This follows from the sharp Gagliardo--Nirenberg theory of Weinstein and the uniqueness theorem of Kwong; see \cite{weinstein,Kwong_uniqueness}.  

It it hence an interesting problem whether solutions of \eqref{eq:standing-wave-intro} that are not necessarily constant along the periodic direction exist. Starting from the Euclidean decaying branch, Dancer \cite{DancerSolution} used local bifurcation methods to construct positive solutions for a wide class of elliptic problems--covering also \eqref{eq:standing-wave-intro}--which decay in the noncompact variables and are periodic, but not constant, in one additional variable.  These solutions provide a fundamental
example of how adding a periodic direction can destroy the rigidity of the Euclidean problem.  However, the bifurcation construction does not directly provide any energetic characterization of the resulting solutions. In particular, it does not show whether they minimize a natural energy functional under a prescribed mass constraint, nor does it describe how their masses are related to the Euclidean mass-critical ground state on $\R^d$.

Our goal is to provide precisely such a normalized variational description for solutions of \eqref{eq:standing-wave-intro}. The point of view is different from Dancer's bifurcation argument: instead of following a local branch of solutions, we characterize the solutions as ground states of a constrained prescribed-mass problem.  More precisely,
following the semivirial-vanishing geometry developed in our previous works \cite{luo2021sharp,Luo_inter,Luo_energy_crit,Luo_LegendreFenchel}, we study the
variational problem
\[
        m_c
        :=
        \inf\{E(u):M(u)=c,\ K(u)=0\},
\]
where the mass \(M\), the energy \(E\), and the semivirial functional \(K\) are
defined precisely in Section~\ref{sec:preliminaries}.  In the Euclidean
mass-critical problem, normalized ground states occur only at the single mass
\(\widehat M(Q)\).  The main result of this paper shows that the waveguide
geometry breaks this rigidity in a strong way: for every mass below
\(|\T|\widehat M(Q)=2\pi\widehat M(Q)\), the level \(m_c\) is attained, and the
corresponding optimizer gives a positive-frequency solution of
\eqref{eq:standing-wave-intro} after a mass-preserving scaling.

NLS-models on waveguide manifolds arise naturally in nonlinear optics
and related physical models; see for instance
\cite{waveguide_ref_1,waveguide_ref_2,waveguide_ref_3}.  They have also
attracted substantial attention in dispersive PDE because the Euclidean
directions provide dispersion while the compact directions retain periodic
dynamics.  Foundational works on scattering and global dynamics include
\cite{HaniPausader,TNCommPDE,TzvetkovVisciglia2016}, while the variational
theory of ground states on product spaces was developed, among others, in
\cite{TTVproduct2014}.  In the focusing case, semivirial-vanishing geometry was
introduced in our previous works
\cite{luo2021sharp,Luo_inter,Luo_energy_crit,Luo_LegendreFenchel}.  This
framework uses the virial structure only in the dispersive Euclidean variables
and leads to variational thresholds adapted to the mixed geometry.  We also refer to
\cite{RmT1,R1T1Scattering,ZhaoZheng2021,LuoSIMA,ForcellaLuoZhao,LuoCriticalScattering} for further recent development in this direction.

The most relevant predecessor of the present work is
\cite{Luo_LegendreFenchel}.  There we adapted semivirial-vanishing geometry to
the mass-critical endpoint by proving a Legendre--Fenchel identity, inspired by
the duality mechanism of \cite{actionvsenergy}.  This allowed us to obtain
normalized ground states for a sequence of masses tending to zero.  However,
the full interval of masses remained open.  The obstruction identified in
\cite{Luo_LegendreFenchel} is specific to the mass-critical setting: the standard
mass-preserving scaling leaves the useful energy on the semivirial constraint
invariant, and the usual mass-supercritical natural-constraint argument no
longer produces the required monotonicity or compactness.  In particular,
\cite{Luo_LegendreFenchel} proved only that \(c\mapsto m_c\) is monotone
decreasing and lower semicontinuous.

The first new ingredient of this paper is a strict monotonicity principle for
the prescribed-mass level \(m_c\), which is the main content of
Section~\ref{sec:projected-variations}.  The non-strict monotonicity in
\cite{Luo_LegendreFenchel} is obtained by a surgery argument which adds extra
mass far away and therefore loses all nonlinear interaction.  Such a
construction cannot yield a strict inequality.  We instead perturb an attained
minimizer in an overlapping way and then project the perturbation back onto the
semivirial constraint.  The projected first variations define two linear
functionals: one measures the first-order change of mass, and the other measures
the first-order change of the \(y\)-energy.  If a direction increases the mass
and decreases the projected \(y\)-energy, then a local strict decrease of \(m_c\)
follows.  If no such direction exists, the optimizer must satisfy a
zero-frequency semilinear elliptic equation after an anisotropic rescaling.  This
alternative is ruled out by a Liouville theorem after periodic extension to
\(\R^{d+1}\).  In a nutshell, the key point is the strict monotonicity
\[
        0<b<a<2\pi\widehat M(Q)
        \quad\Longrightarrow\quad
        m_a<m_b.
\]
This strict inequality eliminates the flat intervals in which mass could escape
at large \(x\)-scale with zero \(y\)-energy, and it restores compactness for all
prescribed masses below the threshold.

The second difficulty is to prove that an optimizer of \(m_c\) is actually a
standing wave solution.  In mass-supercritical problems this is usually a
consequence of the natural-constraint structure detected by scaling.  Here that
argument degenerates because the mass-critical scaling preserves both the mass
constraint and the value of the energy on \(K=0\).  Section~\ref{sec:standing-wave}
overcomes this by returning to the projected first variations.  At a true
minimizer, tangent perturbations preserving the projected mass cannot lower the
projected \(y\)-energy.  A codimension-one multiplier argument then yields a
proportionality relation between the two projected linear functionals.  This
relation is precisely the weak Euler--Lagrange equation.  The multiplier has the
correct sign because of the strict projected direction obtained earlier, and the
zero-frequency alternative is again excluded.  After a mass-preserving
\(x\)-scaling, the optimizer solves \eqref{eq:standing-wave-intro} with positive
frequency.

Let us state the main theorem in the notation of Section~\ref{sec:preliminaries}.
Here \(Q\) is the Euclidean mass-critical ground state defined by
\eqref{eq:Q-eqn}, and \(\widehat M(Q)\) denotes its mass on \(\R^d\).

\begin{theorem}\label{thm:main}
Let $d\ge1$ and let
\(
        0<c<2\pi\whM(Q).
\)
Then the variational problem $m_c$ has a nonnegative optimizer.  Moreover,
after a mass-preserving scaling in the $x$-variable, every nonnegative optimizer
solves the standing wave equation \eqref{eq:standing-wave-intro} for some $\omega>0$.
\end{theorem}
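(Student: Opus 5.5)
We prove both parts using the three ingredients described in the introduction: the monotonicity and lower semicontinuity of $c\mapsto m_c$ from \cite{Luo_LegendreFenchel}, the strict monotonicity principle of Section~\ref{sec:projected-variations}, and the projected first-variation argument of Section~\ref{sec:standing-wave}. Throughout, $E(u)=\tfrac12\|\nabla_{x}u\|_2^2+\tfrac12\|\partial_yu\|_2^2-\tfrac{d}{2d+4}\|u\|_{2+4/d}^{2+4/d}$, $M(u)=\|u\|_2^2$, and $K(u)=\|\nabla_xu\|_2^2-\tfrac{d}{d+2}\|u\|_{2+4/d}^{2+4/d}$. Hence on $\{K=0\}$ we have $E(u)=\tfrac12\|\partial_yu\|_2^2$, so $m_c$ is a minimal ``$y$-energy''.

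\textbf{Existence.} Take a minimizing sequence $(u_n)$ for $m_c$. Replacing $u_n$ by $|u_n|$ preserves $M$ and $E$ and does not increase $\|\nabla|u|\|$, so after re-projecting onto $K=0$ by the $x$-scaling $u^\lambda=\lambda^{d/2}u(\lambda x,y)$ we may assume $u_n\ge0$. Since $\partial_y u_n$ is bounded in $L^2$ but $\nabla_xu_n$ need not be, we use the $x$-scaling freedom (which preserves $M$, $K=0$ and $E$ on $K=0$) to normalize $\|\nabla_x u_n\|_2=1$. The $y$-constant case $c<2\pi\whM(Q)$ together with the sharp Gagliardo--Nirenberg inequality on $\R^d$ forces $\|\partial_yu_n\|_2$ to stay away from zero; this is presumably how $m_c>0$ was obtained in \cite{Luo_LegendreFenchel}. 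Then $\|u_n\|_{2+4/d}$ is bounded below, and by Lions' lemma, after $x$-translation, $u_n\rightharpoonup u\ne0$ in $H^1$.

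Apply the Brezis--Lieb splitting to $M$, $K$ and the $y$-energy, with $a=M(u)\in(0,c]$. If $K(u)>0$, the remainder has $K<0$, and projecting it lowers its $y$-energy cost. If $K(u)\le 0$, project $u$ by $x$-scaling to $K=0$, which does not change its $y$-energy. Either way we get $m_c\ge m_a+\liminf(\text{remainder }y\text{-energy})\ge m_a$, which gives $m_a\le m_c$. The strict monotonicity $m_c<m_a$ for $a<c$ then rules out $a<c$. So $a=c$, the convergence is strong in $L^2$ and (by interpolation) in $L^{2+4/d}$, $u$ attains $m_c$, and $u\ge0$.

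The delicate part is the case distinction on the sign of $K(u)$ and the behaviour of the remainder's $K$. Here we would use the fact that $K=0$ along the sequence, together with weak lower semicontinuity of $\|\nabla_x\cdot\|$, to arrange $K(u)\le0$. We expect this to be the main obstacle in the existence part, since the usual mass-supercritical arguments fail here.

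\textbf{Euler--Lagrange equation.} Let $u\ge0$ be any optimizer. For $\varphi\in H^1$, project $u+t\varphi$ onto $K=0$ by $x$-scaling; this leaves $M$ unchanged. Define the linear functionals $L_M(\varphi)=\tfrac{d}{dt}M$ and $L_Y(\varphi)=\tfrac{d}{dt}\tfrac12\|\partial_y(\cdot)\|^2$ of the projected curve at $t=0$. Minimality together with the monotonicity of $m_c$ gives $L_Y(\varphi)\ge0$ whenever $L_M(\varphi)\le 0$. Since this holds for $\pm\varphi$ on $\ker L_M$, we get $L_Y=0$ on $\ker L_M$, and a codimension-one argument yields $L_Y=-\mu L_M$.

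If $\mu\le 0$ (after choosing the sign convention), then either a direction increases mass and lowers $y$-energy, contradicting strict monotonicity, or $\mu=0$. In the case $\mu=0$, $u$ satisfies, after anisotropic rescaling, the zero-frequency equation $-\Delta u=u^{1+4/d}$. Periodically extending in $y$ to $\R^{d+1}$, this equation has only the trivial nonnegative solution by the Liouville theorem, since $1+4/d$ is subcritical in dimension $d+1$ (it lies below the Sobolev exponent, as $d+1\ge2$). Hence $\mu>0$. Expanding $L_Y+\mu L_M=0$, with the $K$-projection contributions (multiples of $\Delta_x u$ and $u^{1+4/d}$) recombined, gives weakly $-\theta\Delta_xu-\partial_y^2u+\mu u=\kappa u^{1+4/d}$ with $\theta,\kappa>0$. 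A mass-preserving $x$-scaling $\lambda^{d/2}u(\lambda x,y)$ together with the homogeneity relations normalizes $\theta=\kappa=1$ while keeping $\omega>0$. Regularity and the strong maximum principle are not needed, since $u\ge0$ solves \eqref{eq:standing-wave-intro}.
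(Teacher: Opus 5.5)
Your proposal has a genuine gap: the projection onto the constraint does not work. By \eqref{eq:x-scaling-identities}, the mass-preserving scaling obeys $K(u_t)=t^2K(u)$. It leaves $\{K=0\}$ invariant and preserves the sign of $K$, so it can never move a function with $K\neq0$ onto $\{K=0\}$. This is exactly the mass-critical degeneracy the paper emphasizes, and every place where you ``re-project by $x$-scaling'' breaks. That includes the reduction to $u_n\ge0$ (where also $E(|u|)\le E(u)$, not equality), the case $K(u)<0$ in your splitting, and the whole Euler--Lagrange derivation. The case $K(u)>0$ is moreover left open: weak lower semicontinuity of $\|\nabla_x\cdot\|_2$ cannot ``arrange'' $K(u)\le0$.

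The transversal direction is the amplitude scaling, with $\partial_\tau K(\tau u)|_{\tau=1}=-\frac4dA(u)\neq0$ on $\{K=0\}$. It changes the mass, and on $\{K<0\}$ it has $\tau<1$ and lowers the mass, which is what makes monotonicity usable. With it your dichotomy closes, using $m_c>0$:
\begin{itemize}
\item If $K(u)<0$, then $m_{\tau^2a}\le\frac{\tau^2}{2}B(u)\le\tau^2m_c<m_c$, although $\tau^2a<c$.
\item If $K(u)>0$, then $K(r_n)\to-K(u)<0$. Projecting the remainder gives $m_c\le m_{M(\tau_nr_n)}\le\tau_n^2(m_c+o(1))$, so $\tau_n\to1$ and hence $K(r_n)=\frac{d}{d+2}P(r_n)(\tau_n^{4/d}-1)\to0$, a contradiction.
\end{itemize}
Note also that you take strict monotonicity (Theorem~\ref{thm:strict-monotonicity}) as an input. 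It is the paper's main new ingredient, and its proof (Lemma~\ref{lem:left-attained} plus Lemma~\ref{prop:local-strict}) itself runs through the amplitude-projected variations.

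In the Euler--Lagrange part, a projection preserving $M$ and $B$ would give $L_M(\varphi)=2\int u\varphi$ and $L_Y(\varphi)=\int\partial_yu\,\partial_y\varphi$ with no $K'(u)$-terms. Then $L_Y=-\mu L_M$ could not contain $\Delta_xu$ or $u^{1+4/d}$, so the ``$K$-projection contributions'' you recombine have no source. With $\tau_\varepsilon(u+\varepsilon h)$, both functionals acquire the correction $\frac{d(\cdot)}{4A}K'(u)[h]$, giving $\mathcal M_u$ and $\mathcal B_u$. Because $K'(u)$ contains $\nabla_xu\cdot\nabla_xh$ and $u^{2_*-1}h$ with the same coefficient, the resulting equation has $\theta=\kappa$. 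That equality is needed: the mass-preserving scaling multiplies both coefficients by the same factor $\lambda^{-2}$, so it cannot normalize independent $\theta,\kappa$ to $1$.

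Your codimension-one step also needs $\mathcal M_u\not\equiv0$. Otherwise every projected mass is $c+O(t^2)$ of unknown sign, and monotonicity gives no information on $\ker\mathcal M_u$. The paper proves this in Lemma~\ref{lem:calM-nonzero}, via Kwong's uniqueness (every nontrivial slice has mass $\whM(Q)$) and continuity of the slice mass on the connected $\T$.

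Two smaller points. Your sign discussion for $\mu$ is reversed: a direction increasing mass and lowering $y$-energy is consistent with monotonicity; it is one decreasing both that contradicts it. And Liouville applies to positive classical solutions, so elliptic regularity and the strong maximum principle are needed in the zero-frequency exclusion.
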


\begin{remark}
By strong maximum principle, a nonnegative solution given by Theorem \ref{thm:main} is in fact positive everywhere on $\R^d\times\T$.
\end{remark}

Thus the mass rigidity of the Euclidean mass-critical ground state is replaced,
on the waveguide, by an all-mass existence theory below the Euclidean threshold
multiplied by the period length.  In particular, this answers the open question
left in our previous work \cite{Luo_LegendreFenchel}.  We also mention the
recent work \cite{luo2026dancer1}, where a related energy method is developed
for energy-critical Dancer-type solutions.  Both that work and the present one
rely on strict inequalities to recover compactness, but the mechanisms are
different: \cite{luo2026dancer1} proves a strict sub-bubbling bound below the
Euclidean Sobolev threshold, whereas the present paper proves strict
monotonicity of the mass-critical semivirial level $m_c$.

The paper is organized as follows.  Section~\ref{sec:preliminaries} collects the
variational definitions and the inputs from our previous work
\cite{Luo_LegendreFenchel}.  Section~\ref{sec:projected-variations} develops the
projected first-variation calculus and proves the local strict-decrease
mechanism.  Section~\ref{sec:strict-existence} proves strict monotonicity of
\(m_c\) and all-mass existence of normalized ground states.
Section~\ref{sec:standing-wave} identifies optimizers as positive-frequency
standing waves.  Section~\ref{sec:conclusion} concludes the paper by assembling
the previous results into the proof of Theorem~\ref{thm:main}.

\section{Preliminaries}\label{sec:preliminaries}

\subsection{Notation and definitions}
We use the notation $A\lesssim B$ if there exists a constant $C>0$ such that $A\le CB$.  Similarly, $A\gtrsim B$ means $B\lesssim A$, and $A\sim B$ means both $A\lesssim B$ and $B\lesssim A$. 

Throughout the paper we work in the mass-critical case
\[
        \alpha=\frac4d,
        \qquad
        2_*:=\alpha+2=2+\frac4d.
\]
For $u\in H^1(\R^d\times\T)$ define
\begin{align*}
        M(u)&:=\norm[L^2_{x,y}]{u}^2,\\
        E(u)&:=\frac12\norm[L^2_{x,y}]{\nabla_{x,y}u}^2
        -\frac1{2_*}\norm[L^{2_*}_{x,y}]{u}^{2_*},\\
        K(u)&:=\norm[L^2_{x,y}]{\nabla_xu}^2
        -\frac d{d+2}\norm[L^{2_*}_{x,y}]{u}^{2_*}.
\end{align*}
We also write
\[
        A(u):=\norm[L^2_{x,y}]{\nabla_xu}^2,
        \qquad
        B(u):=\norm[L^2_{x,y}]{\partial_yu}^2,
        \qquad
        P(u):=\norm[L^{2_*}_{x,y}]{u}^{2_*}.
\]
For $c>0$, define the sets
\[
        S(c):=\{u\in H^1(\R^d\times\T):M(u)=c\},
        \qquad
        V(c):=\{u\in S(c):K(u)=0\},
\]
and the variational problem $m_c$ by
\[
        m_c:=\inf\{E(u):u\in V(c)\}.
\]
For functions on $\R^d$, we use hatted notation:
\begin{align*}
        \whM(v):=\norm[L^2(\R^d)]{v}^2,\qquad
        \whK(v):=\norm[L^2(\R^d)]{\nabla_xv}^2
        -\frac d{d+2}\norm[L^{2_*}(\R^d)]{v}^{2_*}.
\end{align*}


The following identity is used repeatedly.  If $u\in V(c)$, then
\begin{equation}\label{eq:energy-reduction}
        E(u)=\frac12B(u).
\end{equation}
Indeed, $K(u)=0$ gives
\(
        A(u)=\frac d{d+2}P(u),
\)
and since $1/2_*=d/[2(d+2)]$, we have
\(
        \frac12 A(u)-\frac1{2_*}P(u)=0
\), implying \eqref{eq:energy-reduction}.

The mass-critical $x$-scaling is defined by
\begin{equation}\label{eq:x-scaling}
        u_t(x,y):=t^{d/2}u(tx,y),
        \qquad t>0.
\end{equation}
It satisfies
\begin{equation}\label{eq:x-scaling-identities}
        M(u_t)=M(u),\qquad
        B(u_t)=B(u),\qquad
        K(u_t)=t^2K(u).
\end{equation}
In particular, $u\in V(c)$ implies $u_t\in V(c)$ and $E(u_t)=E(u)$ for any $t\in(0,\infty)$.

We shall also use the sharp Euclidean mass-critical theory. Denote by $Q$ the unique (up to symmetries) positive radial solution of the focusing mass-critical NLS
\begin{equation}\label{eq:Q-eqn}
        -\Delta Q+Q=Q^{2_*-1}
        \qquad\text{on }\R^d.
\end{equation}
The sharp Gagliardo--Nirenberg inequality of Weinstein \cite{weinstein} states that
\begin{equation}\label{eq:sharp-GN}
        \norm[L^{2_*}(\R^d)]{v}^{2_*}
        \le
        \frac{d+2}{d}
        \left(\frac{\whM(v)}{\whM(Q)}\right)^{2/d}
        \norm[L^2(\R^d)]{\nabla v}^2.
\end{equation}
Moreover, by the classical uniqueness theorem of Kwong \cite{Kwong_uniqueness}, every nontrivial nonnegative solution of
\[
        -\Delta v+\omega v=v^{2_*-1},
        \qquad \omega>0,
\]
has mass $\whM(Q)$ after the natural mass-critical scaling.

\subsection{Some useful auxiliary tools}\label{new2}

We record and prove several elementary consequences that will be used repeatedly.

\begin{lemma}[Nonemptiness, monotonicity and lower semicontinuity, \cite{Luo_LegendreFenchel}]\label{prop:jga-known}
Let $0<c<2\pi\whM(Q)$.  Then $V(c)\neq\emptyset$.  Moreover, the map
\(
        c\mapsto m_c
\)
is monotone decreasing and lower semicontinuous on $(0,2\pi\whM(Q))$.
\end{lemma}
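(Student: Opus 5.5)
We prove the three assertions of Lemma~\ref{prop:jga-known} separately, using only \eqref{eq:energy-reduction}, the $x$-scaling \eqref{eq:x-scaling-identities} and the sharp Gagliardo--Nirenberg inequality \eqref{eq:sharp-GN}.

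\emph{Nonemptiness.} Take $u(x,y)=\lambda Q(x)$, which is constant in $y$. Then $M(u)=2\pi\lambda^2\whM(Q)$. Since $\whK(Q)=0$ by the Pohozaev identity for \eqref{eq:Q-eqn}, homogeneity gives
\[
        K(u)=2\pi\Big(\lambda^2\norm[L^2]{\nabla Q}^2-\tfrac d{d+2}\lambda^{2_*}\norm[L^{2_*}]{Q}^{2_*}\Big)
        =2\pi\lambda^2\norm[L^2]{\nabla Q}^2\big(1-\lambda^{4/d}\big).
\]
So $K(u)>0$ whenever $\lambda<1$, and $\lambda<1$ is exactly $c<2\pi\whM(Q)$. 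Hence $y$-independent functions cannot lie in $V(c)$, and we must use genuinely $y$-dependent profiles.

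For such profiles, concentrate the mass in $y$. Take $u=\phi(x)\chi(y)$ with $\norm[L^2(\T)]{\chi}=1$ and $\chi$ supported in a small arc. Then
\[
        K(u)=\norm{\nabla\phi}^2-\tfrac d{d+2}\norm[L^{2_*}]{\phi}^{2_*}\norm[L^{2_*}(\T)]{\chi}^{2_*}.
\]
As the support shrinks, $\norm[L^{2_*}(\T)]{\chi}^{2_*}\to\infty$. Choose $\phi=\mu Q$ with $2\pi\mu^2\whM(Q)$ replaced by $\mu^2\whM(Q)=c$, so that $M(u)=c$. The ratio $\norm{\nabla\phi}^2\big/\norm[L^{2_*}]{\phi}^{2_*}$ is fixed, so $K(u)<0$ once $\chi$ is concentrated enough. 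Joining the two test functions by a continuous path in $S(c)$ (for instance interpolating $\chi$ toward a constant, with the mass renormalized) and applying the intermediate value theorem yields an element of $V(c)$. The path avoids $0$ because the mass is fixed, so this is routine.

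\emph{Monotonicity.} Let $b<a$ and $u\in V(b)$ with $E(u)\le m_b+\varepsilon$; by density we may take $u$ compactly supported in $x$. Adjoin a far-away $x$-translate of a small $y$-independent bump $w=\lambda Q_s(x)$ of mass $a-b$, where $Q_s$ is an $x$-rescaling. This bump has $K(w)>0$ and $B(w)=0$, and the rescaling makes $K(w)=s^2K(\lambda Q)\to0$. The sum $v$ has mass exactly $a$, and $K(v)=K(w)>0$ is small. To return to $K=0$, multiply $u$ by a factor $\theta>1$ close to $1$ and reduce the bump's mass correspondingly. Since $K(\theta u)$ has negative derivative in $\theta$ at $\theta=1$ (here $K(u)=0$ and $P(u)>0$), $K(\theta u)=-K(w)$ is solvable with $\theta\to1$ as $s\to0$. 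The mass bookkeeping is handled by adjusting $\lambda$, and we iterate or use the implicit function theorem in $(\theta,\lambda)$. Then, by \eqref{eq:energy-reduction},
\[
        E(v)=\tfrac12 B(v)=\tfrac12\theta^2B(u)\to E(u),
\]
so $m_a\le m_b+2\varepsilon$.

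\emph{Lower semicontinuity, the main obstacle.} Take $c_n\to c$ and $u_n\in V(c_n)$ with $E(u_n)\le m_{c_n}+1/n$. Rescale $\tilde u_n=\sqrt{c/c_n}\,u_n$. Then $B$ changes by the factor $c/c_n\to1$, but now $K(\tilde u_n)=A(\tilde u_n)\big(1-(c/c_n)^{2/d}\big)\cdot(\text{ratio})$ need not vanish. The needed correction is a $y$-dependent or $x$-scaling projection; note that $x$-scaling preserves the sign of $K$, so it cannot help directly. I would restore $K=0$ by a $y$-concentrating deformation $u(x,y)\mapsto\sqrt{\tau}\,u(x,\tau y)$ localized on an arc, whose cost in $B$ is controlled. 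Alternatively, using monotonicity, it suffices to treat $c_n\uparrow c$ with $c_n<c$, where $K(\tilde u_n)<0$. Then adding a far-away small $y$-independent bump, as in the monotonicity step but with the sign reversed, uses that $K$ is negative and the bump contributes positive $K$, restoring $K=0$ at negligible $B$-cost. This gives $m_c\le\liminf m_{c_n}$, and the case $c_n\downarrow c$ follows from monotonicity.
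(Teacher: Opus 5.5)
\textbf{There is a genuine gap in the lower semicontinuity part:} you have the two one-sided limits reversed. Since $c\mapsto m_c$ is nonincreasing, along $c_n\uparrow c$ one has $m_{c_n}\ge m_c$ for free, so that case needs no construction; your bump argument there only re-proves monotonicity. Along $c_n\downarrow c$, monotonicity gives $m_{c_n}\le m_c$, which is the \emph{wrong} inequality. For a nonincreasing function, lower semicontinuity is exactly right-continuity, $\lim_{c'\downarrow c}m_{c'}=m_c$. This is precisely how the lemma is used in Lemma~\ref{lem:left-attained}, where $c_n\downarrow c$. So your claim that the case $c_n\downarrow c$ ``follows from monotonicity'' is false, and it is the only nontrivial case.

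In that case $\tilde u_n=\sqrt{c/c_n}\,u_n$ has $M(\tilde u_n)=c$ but $K(\tilde u_n)>0$. You must therefore \emph{decrease} $K$ at fixed mass with $B$-cost tending to zero, and adjoining far-away pieces cannot do this. A $y$-independent bump adds mass and, for slice mass below $\whM(Q)$, positive $K$. Any piece $w$ with $K(w)<0$ must, by \eqref{eq:sharp-GN} applied slice by slice, satisfy $\sup_y\norm[L^2(\R^d)]{w(\cdot,y)}^2>\whM(Q)$. Combined with $\bigl|\partial_y\norm[L^2(\R^d)]{w(\cdot,y)}^2\bigr|\le 2\norm[L^2(\R^d)]{w(\cdot,y)}\norm[L^2(\R^d)]{\partial_yw(\cdot,y)}$, this forces $B(w)\ge(\whM(Q)-M(w)/(2\pi))^2/(4M(w))$, which blows up as $M(w)\to0$.

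Your other suggestion, $u\mapsto\sqrt\tau\,u(x,\tau y)$ ``localized on an arc'', is not defined on $\T$ as written. The claim that its $B$-cost is controlled is exactly what would need proof, uniformly along a sequence $u_n$ that may lose compactness. So right-continuity of $m_c$ remains unproved and requires a genuinely different input; the paper does not supply one itself but imports it from \cite{Luo_LegendreFenchel}.

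By contrast, your nonemptiness argument is correct. Your monotonicity argument is the far-away surgery that the paper attributes to \cite{Luo_LegendreFenchel}. Two small repairs are needed there. First, use a compactly supported $y$-independent bump instead of $\lambda Q_s$, so that supports are genuinely disjoint and $M$, $K$, $B$ split exactly. Second, let the $(\theta,\lambda)$ adjustment absorb the small errors in $M$ and $K$ caused by truncating $u$, since truncation leaves $V(b)$.
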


\begin{lemma}[Pseudo-compactness along a minimizing sequence]\label{lem:left-attained}
For every $c\in(0,2\pi\whM(Q))$, there exists some
\(
        \bar c\in(0,c]
\)
and an optimizer $u_{\bar c}\in V(\bar c)$ of $m_{\bar c}$ such that
\[
        m_{\bar c}=m_c.
\]
\end{lemma}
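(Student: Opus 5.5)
We take a minimizing sequence $(u_n)\subset V(c)$ with $E(u_n)=\frac12B(u_n)\to m_c$, and extract a profile via concentration-compactness. Any weak limit that loses mass is charged to the monotonicity of $m_c$.

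\emph{Normalization and bounds.} By \eqref{eq:x-scaling-identities}, $V(c)$ and $E$ are invariant under the $x$-scaling, so we may normalize $A(u_n)=1$. Then $B(u_n)$ is bounded, $M(u_n)=c$, and $P(u_n)=\frac{d+2}{d}$. Hence $(u_n)$ is bounded in $H^1(\R^d\times\T)$ and $P(u_n)$ stays away from zero. Replacing $u_n$ by $|u_n|$ changes none of $M,A,B,P$ except possibly lowering $A,B$. If $A$ drops, we rescale back onto $K=0$ (this is possible since $K<0$ then), which does not increase $B$. So we may assume $u_n\ge0$.

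\emph{Nonvanishing and extraction.} Since $P(u_n)\gtrsim1$, Lions' lemma in the $x$-variable (the $\T$-direction is compact) gives translations $x_n$ with $u_n(\cdot+x_n,\cdot)\rightharpoonup u\neq0$ in $H^1$. Set $\bar c:=M(u)\in(0,c]$ and $r_n:=u_n-u$. Brezis--Lieb splitting of $M,A,B,P$ yields
\[
K(u)+K(r_n)\to0,\qquad B(u)+B(r_n)\to 2m_c.
\]

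\emph{Showing $K(u)=0$.} Suppose first $K(u)>0$. Then $K(r_n)<0$ eventually. Rescaling $r_n$ in $x$ leaves $M$ and $B$ fixed and multiplies $K$ by $t^2$, but it cannot change the sign of $K(r_n)$. Instead we use the sharp bound \eqref{eq:sharp-GN} fibrewise in $y$: for a.e.\ $y$ the fibre $r_n(\cdot,y)$ has mass at most $M(r_n)$, which we will compare with $\whM(Q)$. When $c<2\pi\whM(Q)$ but some fibre mass exceeds $\whM(Q)$, $B$ must be large. This step is delicate. A cleaner route is to avoid it: the function $t\mapsto K(u_t)$ can be tuned only through the $y$-profile, so instead we argue that $K(v)\ge0$ is impossible to sustain on both pieces by convexity of the fibre masses.

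Suppose instead $K(u)<0$. We dilate $u$ in $y$-amplitude, replacing it by $su$ with $s<1$. Then $K(su)=s^2A-s^{2_*}\frac{d}{d+2}P$ becomes positive for small $s$, so some $s_0\in(0,1)$ gives $su\in V(s_0^2\bar c)$. This yields
\[
m_{s_0^2\bar c}\le \tfrac12 s_0^2B(u)<\tfrac12B(u)\le \liminf\tfrac12 B(u_n)=m_c .
\]
This contradicts monotonicity, since $s_0^2\bar c\le c$ and $m$ is decreasing, so that $m_{s_0^2\bar c}\ge m_c$. The case $K(u)>0$ is handled symmetrically via the remainder $r_n$, scaled by $s_n u$-type amplitude on $r_n$ when $K(r_n)<0$, which gives $m_{c}\le\liminf \frac12 B(r_n)$ at a smaller mass. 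Hence $K(u)=0$, $u\in V(\bar c)$, and
\[
m_{\bar c}\le\tfrac12B(u)\le m_c\le m_{\bar c}.
\]
So $u$ is an optimizer with $m_{\bar c}=m_c$.

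\emph{Main obstacle.} The hardest step is the sign analysis of $K(u)$, in particular excluding $K(u)>0$ with $K(r_n)<0$. There I would apply the amplitude-rescaling trick to $r_n$ and use the lower bound $B(r_n)\ge0$ together with the monotonicity of Lemma~\ref{prop:jga-known}, taking care that the rescaled masses stay inside $(0,2\pi\whM(Q))$ where the lemma applies.
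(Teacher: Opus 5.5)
\textbf{There is a genuine gap in the case $K(u)>0$.} Your skeleton matches the paper's: normalize $A(u_n)=1$, extract a nonzero weak limit $u$ after $x$-translations, set $\bar c=M(u)\le c$, show $u\in V(\bar c)$, and close with $m_{\bar c}\le\frac12B(u)\le m_c\le m_{\bar c}$. The paper only cites ``standard arguments'' for the step $u\in V(\bar c)$, and that is where your proposal breaks down.

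Project the remainder, $K(s_nr_n)=0$ with $s_n<1$, and use monotonicity. This gives $m_c\le\frac12s_n^2B(r_n)\le\frac12B(r_n)$. Combined with $B(u)+B(r_n)\to2m_c$, it yields only $B(u)=0$, which is no contradiction. $B(u)=0$ means $u=v(x)$ with $\whM(v)=\bar c/(2\pi)<\whM(Q)$, and the sharp Gagliardo--Nirenberg inequality then gives $K(u)=2\pi\whK(v)>0$. That is exactly the configuration of this case: mass sitting in a $y$-independent profile with zero $y$-energy.

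This is where the mass-critical problem differs from the intercritical one. Here $E-\frac12K=\frac12B$ vanishes on nonzero $y$-independent functions, so monotonicity of $m_c$ alone cannot exclude this profile. The threshold $c<2\pi\whM(Q)$ never enters your argument in an essential way. The fibrewise and convexity remarks do not supply it, and ``handled symmetrically'' does not either.

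Two extra inputs close the case.
\begin{enumerate}[label=(\alph*)]
\item \emph{A quantitative projection factor.} By Brezis--Lieb, $A(r_n)\to1-A(u)$ and $\frac d{d+2}P(r_n)\to1-A(u)+K(u)$. Hence $s_n^{4/d}=A(r_n)/(\frac d{d+2}P(r_n))\to s_*^{4/d}:=\frac{1-A(u)}{1-A(u)+K(u)}$, and $s_*<1$ because $K(u)>0$.
\item \emph{Positivity $m_c>0$.} Suppose $w_n\in V(c)$, $A(w_n)=1$ and $B(w_n)\to0$. Then $w_n$ minus its $y$-average $\bar w_n$ tends to $0$ in $L^2$ by Poincar\'e in $y$. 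It is bounded in $H^1$, so it also tends to $0$ in $L^{2_*}$ by Gagliardo--Nirenberg on $\R^d\times\T$, giving $P(\bar w_n)\to P(w_n)=\frac{d+2}d$. On the other hand, $\whM(\bar w_n)\le c/(2\pi)$ and $A(\bar w_n)\le1$. The sharp inequality then gives $P(\bar w_n)\le\frac{d+2}d\bigl(c/(2\pi\whM(Q))\bigr)^{2/d}<\frac{d+2}d$, a contradiction.
\end{enumerate}
With these, $m_c\le\lim\frac12s_n^2B(r_n)=s_*^2\bigl(m_c-\frac12B(u)\bigr)\le s_*^2m_c<m_c$.

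Two smaller points. In the case $K(u)<0$, the strict inequality $s_0^2B(u)<B(u)$ needs $B(u)>0$; this again follows from the sharp Gagliardo--Nirenberg inequality, and you should say so. Your nonnegativity step moves $|u_n|$ to mass $s^2c<c$, which breaks $M(u_n)=c$. It is not needed for this lemma and should be dropped.
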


\begin{proof}
Choose $c_n\downarrow c$ and $u_n\in V(c_n)$ such that
\[
        E(u_n)\le m_{c_n}+o_n(1).
\]
Using the scaling \eqref{eq:x-scaling}, we may assume
\(
        A(u_n)=1.
\)
Since $K(u_n)=0$, this gives
\begin{align}\label{new}
        P(u_n)=\frac{d+2}{d}.
\end{align}
By \eqref{eq:energy-reduction},
\[
        B(u_n)=2E(u_n)\le 2m_{c_n}+o_n(1).
\]
The monotonicity of $c\mapsto m_c$ from Lemma \ref{prop:jga-known} implies that $(m_{c_n})_n$ is bounded above.  Hence $(u_n)_n$ is bounded in $H^1(\R^d\times\T)$ and \eqref{new} implies
\[
        \liminf_{n\to\infty}\norm[L^{2_*}_{x,y}]{u_n}>0.
\]
By standard compact arguments (see e.g. the proof of \cite[Thm. 1.1]{TTVproduct2014} in the context of waveguide manifolds), this yields translations $x_n\in\R^d$ and a nonzero function $u$ such that, after passing to a subsequence,
\[
        u_n(\cdot+x_n,\cdot)\rightharpoonup u
        \qquad\text{weakly in }H^1(\R^d\times\T).
\]
Relabelling the translated sequence, set $\bar c:=M(u)$.  Then
\[
        0<\bar c\le \liminf_{n\to\infty}M(u_n)=c.
\]
Using the standard arguments for deducing an optimizer of the minimization problem (see e.g. the proof of \cite[Thm. 1.1]{Luo_inter}) we infer that 
\[
        u\in V(\bar c),
        \qquad
        E(u)=m_{\bar c}.
\]
Furthermore, weak lower semicontinuity of $B$ and \eqref{eq:energy-reduction} give
\[
        m_{\bar c}
        =E(u)
        =\frac12B(u)
        \le \frac12\liminf_{n\to\infty}B(u_n)
        =\liminf_{n\to\infty}E(u_n)
        =\lim_{n\to\infty}m_{c_n}.
\]
Additionally, the lower semicontinuity and decreasing monotonicity of $c\mapsto m_c$ from Lemma \ref{prop:jga-known} gives
\[
        \lim_{n\to\infty}m_{c_n}=m_c.
\]
Thus $m_{\bar c}\le m_c$.  Since $\bar c\le c$, Lemma \ref{prop:jga-known} also implies $m_c\le m_{\bar c}$ and consequently $m_{\bar c}=m_c$.
\end{proof}

Finally, we shall also invoke the Liouville theorem of Gidas--Spruck \cite{Liouville_type}: if $D\ge3$ and
\[
        1<q<\frac{D+2}{D-2},
\]
then there is no positive classical solution of
\[
        -\Delta U=U^q
        \qquad\text{on }\R^D.
\]
In dimension $D=2$, the corresponding finite-exponent Liouville theorem is standard; see, for example, \cite{Liouville2}.

\section{Projected first variations}\label{sec:projected-variations}

In this section we work at an attained mass.  The purpose is to understand whether one can increase the mass and decrease the energy after projecting back to the semivirial constraint $K=0$.

\subsection{Projection to the constraint}
We start by showing that an optimizer can always be assumed nonnegative.
\begin{lemma}\label{lem:nonnegative-optimizer}
Suppose $m_b$ is attained for some $b\in(0,2\pi\whM(Q))$.  Then $m_b$ has a nonnegative optimizer.  Moreover, every nonnegative optimizer $u\in V(b)$ satisfies
\(
        B(u)>0.
\)
\end{lemma}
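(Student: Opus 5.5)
For the first assertion we take an optimizer $u\in V(b)$ of $m_b$ and replace it by $|u|$. It is standard that $|u|\in H^1(\R^d\times\T)$ with $|\nabla_{x,y}|u||=|\nabla_{x,y}u|$ almost everywhere. Since $u$ is real-valued, this gives
\[
A(|u|)=A(u),\qquad B(|u|)=B(u),\qquad P(|u|)=P(u),\qquad M(|u|)=M(u)=b.
\]
Hence $K(|u|)=K(u)=0$ and $E(|u|)=E(u)=m_b$, so $|u|$ is a nonnegative optimizer. If complex-valued functions are allowed, the diamagnetic inequality gives $A(|u|)\le A(u)$ and $B(|u|)\le B(u)$, and then two extra steps are needed. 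First, $K(|u|)\le 0$, and if the inequality is strict we rescale by \eqref{eq:x-scaling}; since $K(v_t)=t^2K(v)$ this does not restore $K=0$ on its own. So we would instead use the fibering $s\mapsto s^{d/2}$-type adjustment from \cite{Luo_LegendreFenchel}, or more simply restrict to real $H^1$, which is the setting here. Second, we note that $E(|u|)=\tfrac12B(|u|)\le\tfrac12 B(u)$. In the real setting all of this is immediate.

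For the second assertion, suppose $u\ge0$ lies in $V(b)$, is an optimizer, and satisfies $B(u)=0$. Then $\partial_y u=0$, so $u(x,y)=v(x)$ with $v\in H^1(\R^d)$ and $2\pi\whM(v)=b$. The constraint $K(u)=0$ becomes $2\pi\whK(v)=0$, that is,
\[
\norm[L^2(\R^d)]{\nabla v}^2=\frac d{d+2}\norm[L^{2_*}(\R^d)]{v}^{2_*}.
\]
We also have $v\ne0$, since $b>0$. The sharp Gagliardo--Nirenberg inequality \eqref{eq:sharp-GN} gives
\[
\norm[L^{2_*}]{v}^{2_*}\le\frac{d+2}{d}\Big(\frac{\whM(v)}{\whM(Q)}\Big)^{2/d}\norm[L^2]{\nabla v}^2 .
\]
Substituting the constraint, we get $1\le(\whM(v)/\whM(Q))^{2/d}$, that is, $\whM(v)\ge\whM(Q)$. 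This forces $b=2\pi\whM(v)\ge2\pi\whM(Q)$, which contradicts $b<2\pi\whM(Q)$. Therefore $B(u)>0$.

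This argument does not actually use optimality or nonnegativity: every element of $V(b)$ with $b<2\pi\whM(Q)$ has $B>0$. So the proof reduces to the $y$-independence reduction followed by Weinstein's inequality. The only point needing care is the passage to $|u|$ while preserving the constraint, and in the real-valued setting this is automatic.
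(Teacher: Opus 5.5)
Your proof of the second assertion is correct. It is the same Gagliardo--Nirenberg reduction the paper uses, and your remark that it needs neither optimality nor nonnegativity, so that $B>0$ on all of $V(b)$, is accurate. The problem is the first assertion, which you prove only for real-valued $u$.

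The paper works in complex-valued $H^1(\R^d\times\T)$, the standard NLS setting. That is why it invokes the diamagnetic inequality and has to treat the case $K(|u|)<0$. Declaring that ``the setting here'' is real $H^1$ is not justified, and it is not a harmless reduction: showing that the complex problem has a real optimizer is precisely what the first assertion accomplishes.

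In the complex case you stop at $K(|u|)\le 0$, and the argument is incomplete there:
\begin{itemize}
\item The appeal to a ``fibering $s\mapsto s^{d/2}$-type adjustment'' is not an argument. A mass-preserving $x$-fibering is exactly \eqref{eq:x-scaling}, which you yourself observed cannot change the sign of $K$.
\item Your ``second step'' $E(|u|)=\frac12 B(|u|)$ already presupposes $K(|u|)=0$.
\end{itemize}

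The missing idea is to rule out $K(|u|)<0$ by trading mass for the constraint and then invoking monotonicity. Put $w=|u|$ and suppose $K(w)<0$. Since $K(\tau w)=\tau^2A(w)-\frac d{d+2}\tau^{2_*}P(w)$ is positive for small $\tau>0$ (because $A(w)>0$), there is $\tau\in(0,1)$ with $K(\tau w)=0$. Then $\tau w\in V(\tau^2 b)$, and by \eqref{eq:energy-reduction}
\[
m_{\tau^2 b}\le E(\tau w)=\frac{\tau^2}{2}B(w)\le \frac{\tau^2}{2}B(u)=\tau^2 m_b<m_b .
\]
The strict inequality uses $m_b=\frac12 B(u)>0$, which is your second assertion. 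This contradicts $m_{\tau^2 b}\ge m_b$ from Lemma~\ref{prop:jga-known}. Hence $K(w)=0$, so $w\in V(b)$ with $E(w)=\frac12 B(w)\le m_b$, and $w$ is a nonnegative optimizer.

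This fixes the order of the proof: positivity of $B$ must be established first, as the paper does, and then fed into the argument for $|u|$, together with the monotonicity of $c\mapsto m_c$. Your proposal never uses that monotonicity. You already have every ingredient; they just need to be assembled this way.
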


\begin{proof}
Let $u\in V(b)$ be an optimizer.  First assume that $B(u)=0$.  Then $u$ is independent of $y$, say $u(x,y)=v(x)$.  Since $b<2\pi\whM(Q)$, we have
\(
        \whM(v)<\whM(Q).
\)
By the sharp Gagliardo-Nirenberg inequality \eqref{eq:sharp-GN}, $\whK(v)>0$ unless $v\equiv0$.  Thus
\[
        K(u)=2\pi\whK(v)>0,
\]
which contradicts $u\in V(b)$.  Hence $B(u)>0$ for every optimizer.

Set $w:=|u|$.  Then
\[
        M(w)=M(u)=b,
        \qquad
        P(w)=P(u),
\]
and by the diamagnetic inequality,
\[
        A(w)\le A(u),
        \qquad
        B(w)\le B(u).
\]
Therefore
\(
        K(w)\le K(u)=0.
\)
If $K(w)<0$, then there exists $\tau\in(0,1)$ such that $K(\tau w)=0$.  Since $K(\tau w)=0$, \eqref{eq:energy-reduction} gives
\[
        E(\tau w)=\frac12B(\tau w)=\frac{\tau^2}{2}B(w)
        \le \tau^2 E(u)<E(u)=m_b.
\]
But $M(\tau w)=\tau^2b<b$, and monotonicity of $c\mapsto m_c$ gives
\[
        m_{\tau^2b}\ge m_b.
\]
This contradicts $m_{\tau^2b}\le E(\tau w)<m_b$.  Hence $K(w)=0$.  Consequently $w\in V(b)$ and
\[
        E(w)=\frac12B(w)\le\frac12B(u)=m_b.
\]
Thus $w$ is a nonnegative optimizer.
\end{proof}
By Lemma \ref{lem:nonnegative-optimizer}, we may from now on assume that all optimizers are nonnegative. Let $u\in V(b)$ be a nonnegative optimizer of $m_b$.  Set
\[
        A:=A(u),
        \qquad
        B:=B(u).
\]
Then $A,B>0$ by Lemma~\ref{lem:nonnegative-optimizer}.

For a real-valued test function $h\in C_c^\infty(\R^d\times\T)$, define
\[
        v_\varepsilon:=u+\varepsilon h.
\]
For $|\varepsilon|$ sufficiently small, there is a unique $\tau_\varepsilon>0$ close to $1$ such that
\[
        K(\tau_\varepsilon v_\varepsilon)=0.
\]
Indeed, the function $(\tau,\varepsilon)\mapsto K(\tau v_\varepsilon)$ is $C^1$ near $(1,0)$ and
\[
        \partial_\tau K(\tau u)\big|_{\tau=1}
        =(2-2_*)A=-\frac4d A\ne0.
\]
Thus the implicit function theorem applies.

The first variation of $K$ at $u$ is
\begin{equation}\label{eq:K-prime}
        K'(u)[h]
        =2\int \nabla_xu\cdot\nabla_xh\dd x\dd y
        -2\int u^{2_*-1}h\dd x\dd y.
\end{equation}
We now compute the first-order expansion of \(\tau_\varepsilon\).  Set
\[
        F(\tau,\varepsilon):=K(\tau v_\varepsilon)
        =
        K\bigl(\tau(u+\varepsilon h)\bigr).
\]
Then \(F(1,0)=K(u)=0\).  Moreover, using
\[
        K(\tau u)
        =
        \tau^2A(u)
        -
        \frac d{d+2}\tau^{2_*}P(u),
\]
we obtain
\[
        \partial_\tau F(1,0)
        =
        2A(u)
        -
        2_*\,\frac d{d+2}P(u).
\]
Since \(K(u)=0\), we have
\(
        A(u)=\frac d{d+2}P(u)
\)
and consequently
\[
        \partial_\tau F(1,0)
        =
        (2-2_*)A(u)
        =
        -\frac4d A(u)\neq0.
\]
On the other hand,
\[
        \partial_\varepsilon F(1,0)
        =
        K'(u)[h].
\]
The identity \(F(\tau_\varepsilon,\varepsilon)=0\) holds for all sufficiently small
\(\varepsilon\).  Differentiating this identity at \(\varepsilon=0\), and using
\(\tau_0=1\), gives
\[
        0
        =
        \partial_\tau F(1,0)\,\tau_\varepsilon'\big|_{\varepsilon=0}
        +
        \partial_\varepsilon F(1,0)
        =
        -\frac4d A\,\tau_\varepsilon'\big|_{\varepsilon=0}
        +
        K'(u)[h].
\]
Hence
\[
        \tau_\varepsilon'\big|_{\varepsilon=0}
        =
        \frac{d}{4A}K'(u)[h].
\]
Since \(F\) is \(C^2\) in a neighbourhood of \((1,0)\), the implicit function
\(\varepsilon\mapsto\tau_\varepsilon\) is \(C^2\), and Taylor expansion yields
\begin{equation}\label{eq:tau-expansion-general}
        \tau_\varepsilon
        =
        1+\frac{d}{4A}K'(u)[h]\varepsilon+O(\varepsilon^2).
\end{equation}

We next derive the expansions of the mass and the \(y\)-energy after projection.
First,
\[
        M(v_\varepsilon)
        =
        \int |u+\varepsilon h|^2
        =
        b+2\varepsilon\int uh\,dxdy+O(\varepsilon^2),
\]
and \eqref{eq:tau-expansion-general} implies
\[
        \tau_\varepsilon^2
        =
        1+\frac{d}{2A}K'(u)[h]\varepsilon+O(\varepsilon^2).
\]
Therefore
\begin{align*}
        M(\tau_\varepsilon v_\varepsilon)
        &=
        \tau_\varepsilon^2M(v_\varepsilon) \\
        &=
        \left(
        1+\frac{d}{2A}K'(u)[h]\varepsilon+O(\varepsilon^2)
        \right)
        \left(
        b+2\varepsilon\int uh\,dxdy+O(\varepsilon^2)
        \right) \\
        &=
        b
        +
        2\left(
        \int uh\,dxdy+\frac{db}{4A}K'(u)[h]
        \right)\varepsilon
        +
        O(\varepsilon^2).
\end{align*}
Similarly,
\[
        B(v_\varepsilon)
        =
        \int |\partial_yu+\varepsilon\partial_yh|^2
        =
        B+2\varepsilon\int \partial_yu\,\partial_yh\,dxdy+O(\varepsilon^2),
\]
and hence
\begin{align*}
        B(\tau_\varepsilon v_\varepsilon)
        &=
        \tau_\varepsilon^2B(v_\varepsilon) \\
        &=
        B
        +
        2\left(
        \int \partial_yu\,\partial_yh\,dxdy+\frac{dB}{4A}K'(u)[h]
        \right)\varepsilon
        +
        O(\varepsilon^2).
\end{align*}
Thus
\begin{align}
        M(\tau_\varepsilon v_\varepsilon)
        &=b+2\mathcal M_u(h)\varepsilon+O(\varepsilon^2),
        \label{eq:projected-M}\\
        B(\tau_\varepsilon v_\varepsilon)
        &=B+2\mathcal B_u(h)\varepsilon+O(\varepsilon^2),
        \label{eq:projected-B}
\end{align}
where the projected first variations are
\begin{align*}
        \mathcal M_u(h)
        &:=\int uh\,dxdy+\frac{db}{4A}K'(u)[h],\\
        \mathcal B_u(h)
        &:=\int \partial_yu\,\partial_yh\,dxdy+\frac{dB}{4A}K'(u)[h].
\end{align*}

We refer a test function $h$ to as a {\it{strict projected direction}} if the energies $ \mathcal M_u(h)$ and $ \mathcal B_u(h)$ have opposite signs. That the existence of a strict projected direction leads to local strict monotonicity is guaranteed by the following lemma. 

\begin{lemma}[Strict projected directions]\label{lem:strict-direction}
If there exists $h\in C_c^\infty(\R^d\times\T)$ such that
\[
        \mathcal M_u(h)>0,
        \qquad
        \mathcal B_u(h)<0,
\]
then there exists $\delta>0$ such that
\[
        m_a<m_b
        \qquad\text{for all }a\in(b,b+\delta).
\]
\end{lemma}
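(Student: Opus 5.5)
The plan is to use the projected curve $w_\varepsilon:=\tau_\varepsilon v_\varepsilon=\tau_\varepsilon(u+\varepsilon h)$ directly as a family of competitors. For $|\varepsilon|$ small, $\tau_\varepsilon$ is the $C^2$ implicit function constructed above, so $K(w_\varepsilon)=0$ and $w_\varepsilon\in H^1(\R^d\times\T)$. Hence
\[
        w_\varepsilon\in V\bigl(M(w_\varepsilon)\bigr).
\]
By \eqref{eq:energy-reduction}, the energy of $w_\varepsilon$ is $E(w_\varepsilon)=\frac12B(w_\varepsilon)$, and at $\varepsilon=0$ this equals $\frac12 B=E(u)=m_b$.

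First I would read off the two expansions \eqref{eq:projected-M} and \eqref{eq:projected-B}:
\[
        M(w_\varepsilon)=b+2\mathcal M_u(h)\varepsilon+O(\varepsilon^2),
        \qquad
        E(w_\varepsilon)=m_b+\mathcal B_u(h)\varepsilon+O(\varepsilon^2).
\]
Since $\mathcal M_u(h)>0$ and $\mathcal B_u(h)<0$, there is $\varepsilon_0>0$ such that for every $\varepsilon\in(0,\varepsilon_0]$:
\begin{itemize}
\item $M(w_\varepsilon)>b$;
\item $E(w_\varepsilon)<m_b$;
\item $M(w_\varepsilon)<2\pi\whM(Q)$, so that we stay in the admissible mass range.
\end{itemize}
The $O(\varepsilon^2)$ remainders are uniform near $\varepsilon=0$ because $\tau_\varepsilon$ is $C^2$, and $M(v_\varepsilon)$ and $B(v_\varepsilon)$ are quadratic polynomials in $\varepsilon$.

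Next I would set $\delta:=M(w_{\varepsilon_0})-b>0$. The map $\varepsilon\mapsto M(w_\varepsilon)=\tau_\varepsilon^2M(u+\varepsilon h)$ is continuous on $[0,\varepsilon_0]$. It equals $b$ at $\varepsilon=0$ and $b+\delta$ at $\varepsilon=\varepsilon_0$. By the intermediate value theorem, for every $a\in(b,b+\delta)$ there exists $\varepsilon_a\in(0,\varepsilon_0)$ with $M(w_{\varepsilon_a})=a$. Then $w_{\varepsilon_a}\in V(a)$, and therefore
\[
        m_a\le E(w_{\varepsilon_a})=\tfrac12B(w_{\varepsilon_a})<m_b,
\]
which is the claim.

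I do not expect a serious obstacle here. The only point needing care is that the strict inequalities hold on a whole punctured interval $(0,\varepsilon_0]$ rather than only asymptotically. This follows from the uniform Taylor remainders, since the first-order coefficients are strictly signed. It is also worth noting that no monotonicity of $\varepsilon\mapsto M(w_\varepsilon)$ is needed: the intermediate value theorem alone gives a competitor at every mass $a$ in the interval.
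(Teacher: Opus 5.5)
Your proposal is correct and follows essentially the same route as the paper: you project $u+\varepsilon h$ onto $K=0$, read off the signs of the mass and energy changes from the first-order expansions, and use the intermediate value theorem to reach every mass in $(b,b+\delta)$. Fixing $\varepsilon_0$ first and then setting $\delta=M(w_{\varepsilon_0})-b$ is a slightly cleaner way to guarantee that the chosen $\varepsilon_a$ lies where $B(w_\varepsilon)<B(u)$, a point the paper leaves implicit.
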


\begin{proof}
Let $w_\varepsilon:=\tau_\varepsilon v_\varepsilon$, then $K(w_\varepsilon)=0$.  By \eqref{eq:projected-M},
\(
        M(w_\varepsilon)>b
\)
for all sufficiently small $\varepsilon>0$, and the map $\varepsilon\mapsto M(w_\varepsilon)$ has positive derivative at $0$.  Combining the intermediate value theorem, this implies that the image of $\varepsilon\mapsto M(w_\varepsilon)$ contains the interval $(b,b+\delta)$ with some $\delta>0$.

By \eqref{eq:projected-B},
\(
        B(w_\varepsilon)<B(u)
\)
for all sufficiently small $\varepsilon>0$.  Since $K(w_\varepsilon)=0$, \eqref{eq:energy-reduction} gives
\[
        E(w_\varepsilon)=\frac12B(w_\varepsilon)<\frac12B(u)=E(u)=m_b.
\]
Thus, for every $a\in(b,b+\delta)$, choosing $\varepsilon(a)$ with $M(w_{\varepsilon(a)})=a$ yields
\[
        m_a\le E(w_{\varepsilon(a)})<m_b.
\]
\end{proof}

\subsection{Failure of strict directions}

The next lemmas show that strict projected directions must exist.  The argument is a first-order alternative: if no such direction exists, the optimizer solves a zero-frequency equation, which is impossible by Liouville's theorem.

\begin{lemma}\label{lem:calM-nonzero}
Let $u\in V(b)$ be a nonnegative optimizer of $m_b$, with $0<b<2\pi\whM(Q)$.  Then
\[
        \mathcal M_u\not\equiv0
        \qquad\text{on }C_c^\infty(\R^d\times\T).
\]
\end{lemma}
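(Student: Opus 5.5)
We argue by contradiction: assume $\mathcal M_u(h)=0$ for all $h\in C_c^\infty(\R^d\times\T)$. By \eqref{eq:K-prime}, this says that $u$ is a weak solution of
\[
        u+\frac{db}{2A}\bigl(-\Delta_xu-u^{2_*-1}\bigr)=0,
        \qquad\text{i.e.}\qquad
        -\Delta_xu+\frac{2A}{db}\,u=u^{2_*-1}
\]
on $\R^d\times\T$, with no $y$-derivatives appearing. The plan is to show that this degenerate equation is incompatible with $u\in V(b)$ and $B(u)>0$.

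First we test the equation against $u$ itself; this is legitimate by density, since $u\in H^1$ and $u^{2_*-1}u\in L^1$ by Sobolev embedding on $\R^d\times\T$. This gives
\[
        A+\frac{2A}{db}\,b=P.
\]
Since $K(u)=0$, we have $P=\frac{d+2}{d}A$, so the identity reads $A+\frac{2}{d}A=\frac{d+2}{d}A$. This is consistent, so testing against $u$ alone yields no contradiction, and a finer argument is needed.

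Next we exploit the fact that the equation holds fiberwise. Testing with $h(x,y)=\varphi(x)\psi(y)$ and letting $\psi$ vary shows that for a.e.\ $y$ the slice $v_y:=u(\cdot,y)\in H^1(\R^d)$ solves
\[
        -\Delta v_y+\omega_0v_y=v_y^{2_*-1},
        \qquad \omega_0=\frac{2A}{db}>0.
\]
To justify the slicing, we use Fubini together with the fact that $u(\cdot,y)\in H^1(\R^d)$ for a.e.\ $y$. By Kwong's uniqueness, and since $v_y\ge0$ and $\omega_0$ is fixed, each nontrivial slice is a translate $Q_{\omega_0}(\cdot-x(y))$ of the single rescaled profile. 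Each such slice has mass $\whM(Q)$, because the mass is invariant under the mass-critical scaling. Hence for a.e.\ $y$ we have $\whM(v_y)\in\{0,\whM(Q)\}$.

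On the other hand, $y\mapsto\whM(v_y)$ is continuous, in fact $W^{1,1}$ on $\T$, since $\partial_y\whM(v_y)=2\int v_y\partial_yv_y\,dx$ is integrable. A continuous function with values in $\{0,\whM(Q)\}$ is constant, so it is identically $0$ or identically $\whM(Q)$. The first case gives $u\equiv0$, which is impossible. The second case gives $b=2\pi\whM(Q)$, contradicting $b<2\pi\whM(Q)$. This contradiction proves the lemma. Note that this argument does not even use $B>0$ from Lemma~\ref{lem:nonnegative-optimizer}.

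The main obstacle is the rigorous slicing step. We must show that the weak equation in $(x,y)$ with no $\partial_y$ term implies that almost every slice is a weak $H^1(\R^d)$ solution, and that $v_y\ge0$ nontrivial slices fall under the uniqueness theorem. Kwong's uniqueness requires positivity, which the strong maximum principle in $x$ gives for each nontrivial nonnegative slice. It also requires radial symmetry, which Gidas--Ni--Nirenberg provides after regularity from elliptic bootstrapping in $x$. If the slicing proves delicate, an alternative route is to apply the sharp Gagliardo--Nirenberg inequality \eqref{eq:sharp-GN} slicewise together with the Pohozaev identity for each slice, which forces $\whK(v_y)=0$. Then \eqref{eq:sharp-GN} forces each nonzero slice to have mass at least $\whM(Q)$, and the same continuity argument in $y$ concludes.
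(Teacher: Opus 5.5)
Your proposal is correct and is essentially the paper's proof: the contradiction hypothesis yields the $y$-free equation $-\Delta_x u+\frac{2A}{db}u=u^{2_*-1}$, almost every slice is a nonnegative Euclidean solution with mass in $\{0,\whM(Q)\}$, and continuity of the slice mass on the connected circle $\T$ forces either $u\equiv0$ or $b=2\pi\whM(Q)$. The test against $u$ is a harmless detour, and your fallback via the Pohozaev identity and \eqref{eq:sharp-GN} is also valid: it puts the slice mass in $\{0\}\cup[\whM(Q),\infty)$, which by the same connectedness argument gives $b\ge 2\pi\whM(Q)$, so it avoids Kwong's uniqueness altogether.
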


\begin{proof}
Assume, to the contrary, that $\mathcal M_u\equiv0$.  Then, for every test function $h$,
\[
        \int uh\dd x\dd y+\frac{db}{4A}K'(u)[h]=0.
\]
Using \eqref{eq:K-prime}, we obtain
\[
        \int uh\dd x\dd y+\frac{db}{2A}\int\nabla_xu\cdot\nabla_xh\dd x\dd y
        -
        \frac{db}{2A}\int u^{2_*-1}h\dd x\dd y=0.
\]
Set
\[
        \eta:=\frac{db}{2A}>0.
\]
Then $u$ satisfies, in the sense of distributions,
\begin{equation}\label{eq:slice-equation}
        -\eta\Delta_xu+u=\eta u^{2_*-1}.
\end{equation}
There are no $y$-derivatives in \eqref{eq:slice-equation}.  Hence, for a.e. $y\in\T$, the slice $v_y(x):=u(x,y)$ is a nonnegative $H^1(\R^d)$ solution of
\[
        -\eta\Delta_xv_y+v_y=\eta v_y^{2_*-1}.
\]
Equivalently,
\[
        -\Delta_xv_y+\eta^{-1}v_y=v_y^{2_*-1}.
\]
By the Euclidean mass-critical elliptic theory, every nontrivial nonnegative solution of this equation has $L^2(\R^d)$-mass equal to $\whM(Q)$.  Therefore the slice mass
\[
        y\mapsto \norm[L^2(\R^d)]{u(\cdot,y)}^2
\]
takes values only in $\{0,\whM(Q)\}$ for a.e. $y$.

Since $u\in H^1(\T;L^2(\R^d))$, the map $y\mapsto u(\cdot,y)$ is continuous from $\T$ to $L^2(\R^d)$.  Thus the slice mass is continuous.  Because $\T$ is connected, it is constant.  If it is identically zero, then $u\equiv0$, contradicting $M(u)=b>0$.  If it is identically $\whM(Q)$, then
\[
        M(u)=2\pi\whM(Q),
\]
contradicting $b<2\pi\whM(Q)$.  Hence $\mathcal M_u$ cannot vanish identically.
\end{proof}

\begin{lemma}[First-order alternative]\label{lem:first-order-alternative}
Let $u\in V(b)$ be a nonnegative optimizer of $m_b$.  Then one of the following alternatives holds:
\begin{enumerate}[label=\emph{(\roman*)}]
\item there exists $h\in C_c^\infty(\R^d\times\T)$ such that
\[
        \mathcal M_u(h)>0,
        \qquad
        \mathcal B_u(h)<0;
\]
\item
\[
        \mathcal B_u(h)=0
        \qquad
        \text{for every }h\in C_c^\infty(\R^d\times\T).
\]
\end{enumerate}
\end{lemma}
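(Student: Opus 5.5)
The plan is a purely first-order linear-algebra argument on the two linear functionals $\mathcal M_u,\mathcal B_u$ on $C_c^\infty(\R^d\times\T)$. To exclude the remaining case $\mathcal B_u=\mu\mathcal M_u$ with $\mu>0$, I use the \emph{non-strict} monotonicity of $c\mapsto m_c$ from Lemma~\ref{prop:jga-known}, applied in the direction of decreasing mass.

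\emph{Step 1: failure of (i) forces proportionality.} Assume (i) fails, so for every $h$ with $\mathcal M_u(h)>0$ we have $\mathcal B_u(h)\ge0$. By Lemma~\ref{lem:calM-nonzero}, $\mathcal M_u\not\equiv0$, so there is $h_0$ with $\mathcal M_u(h_0)>0$.

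I claim $\ker\mathcal M_u\subset\ker\mathcal B_u$. Suppose instead that $\mathcal M_u(k)=0$ and $\mathcal B_u(k)\neq0$ for some $k$. Then $h=h_0+tk$ satisfies $\mathcal M_u(h)=\mathcal M_u(h_0)>0$ for every $t$. Choosing $t$ with $t\,\mathcal B_u(k)<-\mathcal B_u(h_0)$ gives $\mathcal B_u(h)<0$, which contradicts the failure of (i).

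Since the two functionals are linear on the same space, the kernel inclusion gives $\mathcal B_u=\mu\mathcal M_u$ with $\mu:=\mathcal B_u(h_0)/\mathcal M_u(h_0)$. Moreover $\mu\ge0$, because $\mathcal B_u(h_0)\ge0$ by the failure of (i).

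\emph{Step 2: exclude $\mu>0$.} Suppose $\mu>0$. Take $h:=-h_0$, so that $\mathcal M_u(h)<0$ and $\mathcal B_u(h)=\mu\mathcal M_u(h)<0$. Form $w_\varepsilon=\tau_\varepsilon(u+\varepsilon h)$ exactly as in the proof of Lemma~\ref{lem:strict-direction}, so that $K(w_\varepsilon)=0$.

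By \eqref{eq:projected-M}, $\varepsilon\mapsto M(w_\varepsilon)$ has negative derivative at $0$. By the intermediate value theorem its image for small $\varepsilon>0$ therefore contains an interval $(b-\delta,b)$.

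By \eqref{eq:projected-B}, $B(w_\varepsilon)<B(u)$ for small $\varepsilon>0$. Then \eqref{eq:energy-reduction} gives $E(w_\varepsilon)=\tfrac12B(w_\varepsilon)<\tfrac12B(u)=m_b$.

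Hence, for $a\in(b-\delta,b)$, choosing $\varepsilon(a)$ with $M(w_{\varepsilon(a)})=a$ yields $m_a\le E(w_{\varepsilon(a)})<m_b$. This contradicts the monotone decrease of $c\mapsto m_c$ in Lemma~\ref{prop:jga-known}, which gives $m_a\ge m_b$ for $a<b$. For small $\delta$ the value $a$ stays in $(0,2\pi\whM(Q))$, so the lemma applies.

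Therefore $\mu=0$, that is, $\mathcal B_u\equiv0$, which is alternative (ii).

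\emph{Main obstacle.} Step 1 alone yields only proportionality with $\mu\ge0$. The key point is Step 2: $\mu>0$ cannot be ruled out from the signs at mass increase. It is ruled out by running the projected-variation construction backwards, toward smaller mass, and comparing with the already known non-strict monotonicity. The only technical care needed is that the expansions \eqref{eq:projected-M}--\eqref{eq:projected-B} and the implicit-function construction of $\tau_\varepsilon$ hold equally for $h=-h_0$; this is immediate, since they were derived for arbitrary $h\in C_c^\infty(\R^d\times\T)$.
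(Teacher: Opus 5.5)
Your proof is correct and uses the same ingredients as the paper: Lemma~\ref{lem:calM-nonzero}, the projected variation $w_\varepsilon$ in a direction with $\mathcal M_u<0$ and $\mathcal B_u<0$, and the non-strict monotonicity from Lemma~\ref{prop:jga-known}. Only the order differs: the paper first excludes such minus--minus directions and then gets $\mathcal B_u\equiv0$ by linearity, whereas you first derive $\mathcal B_u=\mu\mathcal M_u$ with $\mu\ge0$ and then use the same mass-decreasing contradiction to rule out $\mu>0$ (the intermediate value step is superfluous, since a single small $\varepsilon>0$ with $M(w_\varepsilon)<b$ and $E(w_\varepsilon)<m_b$ already contradicts monotonicity).
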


\begin{proof}
Assume that alternative (i) fails.  Then
\begin{equation}\label{eq:no-plus-minus}
        \mathcal M_u(h)>0
        \quad\Longrightarrow\quad
        \mathcal B_u(h)\ge0.
\end{equation}
We first show that
\begin{equation}\label{eq:no-minus-minus}
        \mathcal M_u(h)<0
        \quad\Longrightarrow\quad
        \mathcal B_u(h)\ge0.
\end{equation}
Indeed, if $\mathcal M_u(h)<0$ and $\mathcal B_u(h)<0$, then for small $\varepsilon>0$ the projected function $w_\varepsilon=\tau_\varepsilon(u+\varepsilon h)$ satisfies
\[
        M(w_\varepsilon)<b,
        \qquad
        E(w_\varepsilon)<m_b.
\]
Thus
\[
        m_{M(w_\varepsilon)}<m_b.
\]
But $M(w_\varepsilon)<b$, and the monotonicity from Lemma~\ref{prop:jga-known} gives
\[
        m_{M(w_\varepsilon)}\ge m_b,
\]
a contradiction.  Hence \eqref{eq:no-minus-minus} holds.

Now let $h$ be such that $\mathcal M_u(h)>0$.  By \eqref{eq:no-plus-minus}, $\mathcal B_u(h)\ge0$.  Applying \eqref{eq:no-minus-minus} to $-h$, we get $\mathcal B_u(-h)\ge0$, hence $\mathcal B_u(h)\le0$.  Therefore $\mathcal B_u(h)=0$.  The same argument applies to every $h$ with $\mathcal M_u(h)<0$.

It remains to consider $h$ with $\mathcal M_u(h)=0$.  By Lemma~\ref{lem:calM-nonzero}, choose $g$ with $\mathcal M_u(g)>0$ after replacing $g$ by $-g$ if necessary.  For every sufficiently small $t>0$,
\[
        \mathcal M_u(h+tg)>0.
\]
Hence $\mathcal B_u(h+tg)=0$.  Since $\mathcal B_u(g)=0$, it follows that $\mathcal B_u(h)=0$.  Therefore alternative (ii) holds.
\end{proof}

\begin{lemma}[Zero-frequency alternative]\label{lem:zero-frequency}
Let $u\in V(b)$ be a nonnegative optimizer of $m_b$.  If alternative \emph{(ii)} in Lemma~\ref{lem:first-order-alternative} holds, then $u$ satisfies
\begin{equation}\label{eq:anisotropic-zero}
        -\theta\Delta_xu-\partial_y^2u=\theta u^{2_*-1}
\end{equation}
in the sense of distributions, where
\[
        \theta:=\frac{dB}{2A}>0.
\]
\end{lemma}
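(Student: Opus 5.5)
This is a direct computation: I will expand the identity $\mathcal B_u(h)=0$ using the formula \eqref{eq:K-prime} for $K'(u)[h]$ and read off the distributional equation. By definition,
\[
        \mathcal B_u(h)=\int \partial_yu\,\partial_yh\dd x\dd y+\frac{dB}{4A}\Bigl(2\int\nabla_xu\cdot\nabla_xh\dd x\dd y-2\int u^{2_*-1}h\dd x\dd y\Bigr).
\]
With $\theta=\frac{dB}{2A}$, alternative (ii) therefore says that for every real $h\in C_c^\infty(\R^d\times\T)$,
\[
        \int \partial_yu\,\partial_yh\dd x\dd y+\theta\int\nabla_xu\cdot\nabla_xh\dd x\dd y-\theta\int u^{2_*-1}h\dd x\dd y=0 .
\]
This is exactly the weak formulation of $-\theta\Delta_xu-\partial_y^2u=\theta u^{2_*-1}$.

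Two points need a brief check to make this rigorous. First, every term must be a well-defined distribution. Since $u\in H^1(\R^d\times\T)$, the gradient terms are $L^2$ functions paired against $\nabla h$. By the Sobolev embedding on the $(d+1)$-dimensional waveguide, $u\in L^{2_*}$, so $u^{2_*-1}\in L^{2_*/(2_*-1)}\subset L^1_{\mathrm{loc}}$, and the nonlinear term pairs with $h$. Here $u\ge0$, so $u^{2_*-1}=|u|^{2_*-2}u$ is unambiguous. Second, integrating by parts in the distributional sense turns $\int\partial_yu\,\partial_yh$ into $\langle-\partial_y^2u,h\rangle$ and $\int\nabla_xu\cdot\nabla_xh$ into $\langle-\Delta_xu,h\rangle$. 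There are no boundary terms, because $h$ has compact support in $x$ and $\T$ has no boundary.

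For the sign of $\theta$, note that $A>0$ and $B>0$ by Lemma~\ref{lem:nonnegative-optimizer}, hence $\theta>0$.

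There is no real obstacle here. The only delicate point is making sure the constant matches: the factor $2$ in $K'(u)[h]$ combines with $\frac{dB}{4A}$ to give exactly $\theta=\frac{dB}{2A}$ on both the gradient term and the nonlinear term. That is why the same $\theta$ appears on both sides of \eqref{eq:anisotropic-zero}, while the $y$-term keeps coefficient $1$.
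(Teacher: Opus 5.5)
Your proposal is correct and follows the paper's proof: substitute \eqref{eq:K-prime} into $\mathcal B_u(h)=0$, set $\theta=\frac{dB}{2A}$, read off the weak formulation, and get $\theta>0$ from $A,B>0$. Your extra well-definedness remarks are fine. Strictly, Lemma~\ref{lem:nonnegative-optimizer} only gives $B>0$; $A>0$ holds because $A=0$ would make $u$ independent of $x$ and hence zero in $H^1(\R^d\times\T)$.
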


\begin{proof}
Alternative {(ii)} means
\[
        \int \partial_yu\,\partial_yh\dd x\dd y+\frac{dB}{4A}K'(u)[h]=0
\]
for every test function $h$.  Using \eqref{eq:K-prime} and setting $\theta=dB/(2A)$, we obtain
\[
        \int \partial_yu\,\partial_yh\dd x\dd y
        +\theta\int \nabla_xu\cdot\nabla_xh\dd x\dd y
        -\theta\int u^{2_*-1}h\dd x\dd y=0.
\]
This is exactly the weak formulation of \eqref{eq:anisotropic-zero}.  The positivity of $\theta$ follows from $A,B>0$.
\end{proof}

\begin{lemma}[Liouville exclusion]\label{lem:liouville}
There is no nontrivial nonnegative $H^1(\R^d\times\T)$ solution of
\[
        -\theta\Delta_xu-\partial_y^2u=\theta u^{2_*-1}
\]
with $\theta>0$.
\end{lemma}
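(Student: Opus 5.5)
The plan is to remove the anisotropy by rescaling, extend periodically in $y$, and then invoke the Gidas--Spruck Liouville theorem on $\R^{d+1}$.

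\emph{Rescaling.} Suppose $u\ge0$, $u\not\equiv0$, is an $H^1(\R^d\times\T)$ solution. Set
\[
U(x,y):=u(\sqrt\theta\,x,y).
\]
Then $\Delta_xU(x,y)=\theta(\Delta_xu)(\sqrt\theta x,y)$, so
\[
-\Delta_{x,y}U=\theta U^{2_*-1}.
\]
A further constant multiple $\lambda U$ with $\lambda^{2_*-2}=\theta$ removes the factor $\theta$. We therefore obtain a nonnegative solution of $-\Delta W=W^{q}$ with $q=2_*-1=1+4/d$.

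\emph{Regularity and positivity.} The exponent $1+4/d$ is subcritical in $d+1$ dimensions, since $1+4/d<(d+3)/(d-1)$ for $d\ge2$; for $d=1$ every finite exponent is subcritical. Standard elliptic bootstrap, carried out locally with $u\in H^1$ and Sobolev embedding, then gives $W\in C^2$. The strong maximum principle for $-\Delta W=W^q\ge0$ shows that $W>0$ everywhere.

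\emph{Periodic extension.} Extend $W$ $2\pi$-periodically in $y$ to all of $\R^{d+1}$. The extension is a positive classical solution of $-\Delta W=W^q$ on $\R^{D}$ with $D=d+1$, and $1<q<(D+2)/(D-2)$ when $D\ge3$. For $D\ge3$, i.e.\ $d\ge2$, this contradicts the Gidas--Spruck theorem. For $d=1$ we have $D=2$, and the two-dimensional Liouville theorem quoted after Lemma~\ref{lem:left-attained} applies.

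\emph{Direct alternative.} A more direct argument is also available and avoids Liouville entirely. Test the equation with $h=u$; integrating over $\T$ causes no boundary terms. Then test with the Pohozaev multiplier $x\cdot\nabla_x u$ in the $x$-variables only. Because of the mass-critical exponent, these two identities together give
\[
\int|\partial_yu|^2=0 .
\]
Hence $u$ is independent of $y$, and we are left with $-\Delta_xv=v^{2_*-1}$ on $\R^d$ with $v\in H^1(\R^d)$. The Pohozaev identity forces $v=0$. I would mention this as a backup.

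\emph{Main obstacle.} The delicate step is regularity: one must justify that the $H^1$ weak solution is classical so that the Liouville theorem applies. This needs a careful bootstrap, using $L^p$ estimates on bounded cylinders and the subcriticality of $2_*-1$. The periodic extension is then routine.
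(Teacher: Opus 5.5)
Your main argument is correct and is essentially the paper's proof. Your $W=\lambda U$ with $\lambda=\theta^{1/(2_*-2)}=\theta^{d/4}$ is exactly the paper's $v(x,y)=\theta^{d/4}u(\sqrt\theta\,x,y)$. The remaining steps also match: elliptic regularity, the strong maximum principle, periodic extension in $y$, and then Gidas--Spruck for $d\ge2$ (resp.\ the two-dimensional Liouville theorem for $d=1$).

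You should, however, delete the ``direct alternative'', because its key claim is false. Write $A=A(u)$, $B=B(u)$, $P=P(u)$. Testing with $u$ gives $\theta A+B=\theta P$. Testing with $x\cdot\nabla_xu$ gives
\[
\theta\,\frac{2-d}{2}\,A-\frac d2\,B=-\frac{\theta d}{2_*}\,P=-\frac{\theta d^2}{2(d+2)}\,P .
\]
Eliminating $P$ yields $2\theta A=dB$, which is equivalent to $K(u)=0$, not $B=0$. The two identities are two linear relations in three unknowns. They leave the one-parameter family $(A,B,P)\propto(1,2\theta/d,(d+2)/d)$, which is perfectly compatible with $u\not\equiv0$.

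In fact, in Lemma~\ref{lem:zero-frequency} the constant is $\theta=dB/(2A)$, which is precisely this relation. So these integral identities can never exclude the zero-frequency alternative. No $y$-Pohozaev identity is available either, since $y\,\partial_yu$ is not periodic. This is why the Liouville theorem is genuinely needed.
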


\begin{proof}
Let $t:=\sqrt\theta$ and define
\[
        v(x,y):=t^{d/2}u(tx,y).
\]
Using $\frac d2(2_*-1)=\frac d2+2$, a direct computation gives
\[
        -\Delta_{x,y}v=v^{2_*-1}
        \qquad\text{on }\R^d\times\T.
\]
By elliptic regularity and the strong maximum principle, $v$ is either identically zero or strictly positive.  Since $u$ is nontrivial, $v>0$.

Extend $v$ periodically in the $y$-variable to a positive classical solution on $\R^{d+1}$.  Set
\[
        D:=d+1,
        \qquad
        q:=2_*-1=1+\frac4d.
\]
If $d=1$, then $D=2$ and $q=5$ is a finite exponent; the two-dimensional Liouville theorem quoted in Section~\ref{new2} excludes positive entire solutions of $-\Delta v=v^q$.  If $d\ge2$, then $D\ge3$ and
\[
        1<q=1+\frac4d<1+\frac4{d-1}=\frac{D+2}{D-2}.
\]
Thus $q$ is Sobolev-subcritical in dimension $D$, and the Gidas--Spruck Liouville theorem quoted in Section~\ref{new2} again excludes such a positive entire solution.  This contradiction proves the lemma.
\end{proof}

Summarizing Lemma \ref{lem:nonnegative-optimizer} to Lemma \ref{lem:liouville}, we deduce immediately the existence of a strict projected direction, and the local strict decrease of $c\mapsto m_c$ at attained masses.

\begin{lemma}[Existence of a strict projected direction]
\label{lem:strict-projected-direction-exists}
Let \(u\in V(b)\) be a nonnegative optimizer of \(m_b\), where
\(
        0<b<2\pi\widehat M(Q).
\)
Then there exists \(h_0\in C_c^\infty(\mathbb R^d\times\mathbb T)\) such that
\[
        \mathcal M_u(h_0)>0,
        \qquad
        \mathcal B_u(h_0)<0.
\]
\end{lemma}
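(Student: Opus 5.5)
The plan is to argue by contradiction, chaining the preceding lemmas. Suppose no $h_0\in C_c^\infty(\R^d\times\T)$ satisfies $\mathcal M_u(h_0)>0$ and $\mathcal B_u(h_0)<0$. Then alternative (i) of Lemma~\ref{lem:first-order-alternative} fails, so alternative (ii) holds, i.e.\ $\mathcal B_u\equiv0$ on test functions. Its proof uses Lemma~\ref{lem:calM-nonzero}, which is available here because $0<b<2\pi\whM(Q)$ and $u$ is a nonnegative optimizer. Lemma~\ref{lem:zero-frequency} then shows that $u$ solves the anisotropic zero-frequency equation \eqref{eq:anisotropic-zero} in the distributional sense, with $\theta=dB/(2A)>0$. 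Positivity of $\theta$ relies on $A,B>0$, which follows from Lemma~\ref{lem:nonnegative-optimizer} for $B$ and from $K(u)=0$, $u\not\equiv0$ for $A$.

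Next I apply Lemma~\ref{lem:liouville}. The function $u$ is nonnegative, lies in $H^1(\R^d\times\T)$, and is nontrivial since $M(u)=b>0$. This contradicts the nonexistence of such solutions, so alternative (i) must hold, and the $h$ it provides is the desired $h_0$.

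The main point to check is the passage from the distributional equation to the classical positive solution used in the Liouville step. Lemma~\ref{lem:liouville} invokes elliptic regularity for $H^1$ weak solutions of $-\Delta v=v^{1+4/d}$ on $\R^d\times\T$. This nonlinearity is $H^1$-subcritical in dimension $d+1$, so a standard bootstrap (Brezis--Kato or Moser iteration, then Schauder estimates) gives $v\in C^2$. The strong maximum principle then yields $v>0$, and periodic extension produces a positive entire solution on $\R^{d+1}$. Since that lemma is already stated, the present proof reduces to verifying its hypotheses, which is routine.
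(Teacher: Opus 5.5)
Your proposal is correct and matches the paper's argument: the paper also obtains this lemma by contradiction. It chains Lemma~\ref{lem:first-order-alternative} (which relies on Lemma~\ref{lem:calM-nonzero} and hence on $b<2\pi\whM(Q)$), Lemma~\ref{lem:zero-frequency} and Lemma~\ref{lem:liouville}, so that the failure of alternative (i) produces a nontrivial nonnegative $H^1$ solution of the zero-frequency equation, which the Liouville lemma rules out. Your additional checks are accurate and only make explicit what the paper leaves implicit: $A>0$ follows from $K(u)=0$ and $u\not\equiv0$, and the exponent $1+4/d$ is $H^1$-subcritical in dimension $d+1$, which justifies the regularity bootstrap.
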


\begin{lemma}[Local strict decrease at attained masses]\label{prop:local-strict}
Let $b\in(0,2\pi\whM(Q))$, and suppose that $m_b$ is attained.  Then there exists $\delta>0$ such that
\[
        m_a<m_b
        \qquad\text{for every }a\in(b,b+\delta).
\]
\end{lemma}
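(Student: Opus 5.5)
The argument is a direct assembly of the preceding lemmas; no new estimate is needed. Since $m_b$ is attained with $b\in(0,2\pi\whM(Q))$, Lemma~\ref{lem:nonnegative-optimizer} provides a nonnegative optimizer $u\in V(b)$. Moreover, every nonnegative optimizer satisfies $B(u)>0$, and hence also $A(u)>0$. For $A(u)$ this is because $A(u)=0$ forces $u$ to be constant in $x$, so $u\equiv0$ in $H^1$. Consequently the projection $\tau_\varepsilon$ and the projected first variations $\mathcal M_u,\mathcal B_u$ of this section are well defined at $u$.

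First I verify Lemma~\ref{lem:strict-projected-direction-exists}. Apply the first-order alternative, Lemma~\ref{lem:first-order-alternative}, to $u$. Suppose alternative (ii) holds. Then Lemma~\ref{lem:zero-frequency} shows that $u$ is a nonnegative $H^1(\R^d\times\T)$ distributional solution of \eqref{eq:anisotropic-zero}, with $\theta=dB/(2A)>0$. This $u$ is nontrivial, since $M(u)=b>0$. That contradicts the Liouville exclusion of Lemma~\ref{lem:liouville}. Hence alternative (i) holds, which gives $h_0\in C_c^\infty(\R^d\times\T)$ with $\mathcal M_u(h_0)>0$ and $\mathcal B_u(h_0)<0$.

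The claim then follows by feeding $h_0$ into Lemma~\ref{lem:strict-direction}. The curve $w_\varepsilon=\tau_\varepsilon(u+\varepsilon h_0)$ stays on $\{K=0\}$. Its mass $M(w_\varepsilon)$ has positive derivative $2\mathcal M_u(h_0)$ at $\varepsilon=0$. Its $y$-energy satisfies $B(w_\varepsilon)<B(u)$ for small $\varepsilon>0$, and \eqref{eq:energy-reduction} turns this into $E(w_\varepsilon)<m_b$. By continuity of $\varepsilon\mapsto M(w_\varepsilon)$ and the intermediate value theorem, every $a\in(b,b+\delta)$ is attained as $M(w_{\varepsilon(a)})$ for a suitable small $\varepsilon(a)>0$. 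Hence $m_a\le E(w_{\varepsilon(a)})<m_b$.

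The only delicate point is the Liouville step inside Lemma~\ref{lem:liouville}. There one must check that an $H^1$ distributional solution is regular enough to be a positive classical solution after periodic extension to $\R^{d+1}$. This requires elliptic regularity, which works because the exponent $2_*-1$ is Sobolev-subcritical in dimension $d+1$, together with the strong maximum principle. Since that lemma is already established, the present proof reduces to the bookkeeping above.
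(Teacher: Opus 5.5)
Your proposal is correct and is the same argument as the paper. The paper gives no separate proof: it chains Lemmas~\ref{lem:nonnegative-optimizer}--\ref{lem:liouville} to get a strict projected direction, with alternative (ii) ruled out by the zero-frequency equation and the Liouville exclusion, and then invokes Lemma~\ref{lem:strict-direction}. Your remark that $A(u)>0$ because $u\not\equiv0$ usefully fills a step the paper leaves implicit.
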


\section{Strict monotonicity and existence of optimizers at any mass}\label{sec:strict-existence}

The local result obtained in the previous section says that the level \(m_c\) strictly decreases to the right of every mass at which the infimum is attained.  This section globalizes that local information.  The compactness input from Lemma~\ref{lem:left-attained} ensures that if a flat interval of \(m_c\) existed, then the left endpoint of such a flat interval would be attained.  The local strict-decrease lemma would then immediately contradict flatness.  This gives strict monotonicity on the whole interval below the Euclidean mass threshold.

\begin{theorem}[Strict monotonicity]\label{thm:strict-monotonicity}
For every
\(
        0<b<a<2\pi\whM(Q),
\)
one has
\(
        m_a<m_b.
\)
\end{theorem}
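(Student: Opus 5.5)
We argue by contradiction, combining the non-strict monotonicity from Lemma~\ref{prop:jga-known}, the pseudo-compactness of Lemma~\ref{lem:left-attained}, and the local strict decrease of Lemma~\ref{prop:local-strict}. Suppose that for some $0<b<a<2\pi\whM(Q)$ we had $m_a\ge m_b$. Since $c\mapsto m_c$ is monotone decreasing, this forces $m_c=m_b$ for every $c\in[b,a]$. The level is therefore flat on the whole interval $[b,a]$, and the task is to exhibit an attained mass inside (or to the left of) this flat region. Lemma~\ref{prop:local-strict} then gives an immediate contradiction.

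To find such an attained mass, apply Lemma~\ref{lem:left-attained} at the right endpoint $c=a$. It produces some $\bar c\in(0,a]$ and an optimizer $u_{\bar c}\in V(\bar c)$ with $m_{\bar c}=m_a$. We then split into two cases according to where $\bar c$ lies.

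\textbf{Case $\bar c<a$.} Lemma~\ref{prop:local-strict} applied at $b'=\bar c$ yields $\delta>0$ such that $m_{c'}<m_{\bar c}=m_a$ for all $c'\in(\bar c,\bar c+\delta)$. We may choose such a $c'$ with $c'<a$. Monotonicity then gives $m_a\le m_{c'}<m_a$, which is a contradiction. This case requires no use of the flatness at all.

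\textbf{Case $\bar c=a$.} Now $m_a$ itself is attained. Lemma~\ref{prop:local-strict} only controls masses to the right of $a$, whereas flatness lives on $[b,a]$, so we cannot conclude directly. Instead we apply Lemma~\ref{lem:left-attained} at an intermediate mass $c\in(b,a)$. This gives $\bar c'\le c<a$ with $m_{\bar c'}=m_c=m_b$ attained. Since $\bar c'\le c<a$ and the flat value persists up to $a$, we are back in the first case: local strict decrease at $\bar c'$ produces some $c''\in(\bar c',a)$ with $m_{c''}<m_{\bar c'}=m_b$. But if $c''\in[b,a]$, then $m_{c''}=m_b$ by flatness. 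If instead $c''<b$, monotonicity gives $m_{c''}\ge m_b$. Either way we obtain a contradiction.

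Simplest route. In fact the cleanest argument applies Lemma~\ref{lem:left-attained} directly at some $c\in(b,a)$ from the outset. The resulting $\bar c\le c<a$ is attained with $m_{\bar c}=m_c=m_b$, and local strict decrease gives a mass $c''\in(\bar c,a)$ with $m_{c''}<m_b$. This contradicts $m_{c''}\ge m_a=m_b$, which follows from monotonicity since $c''<a$. Hence $m_a<m_b$.

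Main obstacle. The only genuine difficulty is already packaged in the earlier lemmas: ensuring that the optimizer produced by Lemma~\ref{lem:left-attained} lies strictly to the left of $a$, and that Lemma~\ref{prop:local-strict} applies there. This is exactly why we evaluate at an interior point $c<a$ rather than at $a$ itself. Beyond that, the remaining step is bookkeeping with the monotonicity of $c\mapsto m_c$.
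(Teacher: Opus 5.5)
Your argument is correct and is essentially the paper's proof. The paper also assumes $m_a=m_b$, uses Lemma~\ref{lem:left-attained} to get an attained mass $\bar b$ with $m_{\bar b}=m_b$, and reaches a contradiction from Lemma~\ref{prop:local-strict} at $\bar b$; it simply applies Lemma~\ref{lem:left-attained} at $b$ itself, which already gives $\bar b\le b<a$, so your case split and the interior point $c\in(b,a)$ are unnecessary though harmless.
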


\begin{proof}
Suppose, to the contrary, that strict monotonicity fails.  Since $m_c$ is monotone decreasing, there exist
\(
        0<b<a<2\pi\whM(Q)
\)
such that
\(
        m_a=m_b.
\)
Then
\[
        m_\rho=m_b
        \qquad\text{for every }\rho\in[b,a].
\]
By Lemma~\ref{lem:left-attained}, there exist $\bar b\in(0,b]$ and an optimizer $u_{\bar b}\in V(\bar b)$ of $m_{\bar b}$ such that
\[
        m_{\bar b}=m_b.
\]
By monotonicity, $m_\rho$ is constant on $[\bar b,a]$:
\[
        m_\rho=m_{\bar b}
        \qquad\text{for every }\rho\in[\bar b,a].
\]
However, Lemma~\ref{prop:local-strict} applied at the attained mass $\bar b$ gives $\delta>0$ such that
\[
        m_\rho<m_{\bar b}
        \qquad\text{for }\rho\in(\bar b,\bar b+\delta).
\]
Choosing $\rho\in(\bar b,\min\{a,\bar b+\delta\})$ gives a contradiction.  Hence strict monotonicity holds.
\end{proof}

We next use strict monotonicity to upgrade the pseudo-compactness statement of
Lemma~\ref{lem:left-attained} to genuine compactness at the prescribed mass. Indeed, Lemma~\ref{lem:left-attained} already gives an optimizer at some mass \(\bar c\le c\) with the same energy level.  Strict monotonicity rules out the possibility \(\bar c<c\), and hence prevents loss of mass.

\begin{theorem}[Existence of normalized ground states]\label{thm:existence}
For every
\[
        0<c<2\pi\whM(Q),
\]
the variational problem $m_c$ has an optimizer.  Moreover, $m_c$ has a nonnegative optimizer.
\end{theorem}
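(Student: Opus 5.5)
The proof combines Lemma~\ref{lem:left-attained} with Theorem~\ref{thm:strict-monotonicity}, and then uses Lemma~\ref{lem:nonnegative-optimizer} to get a nonnegative optimizer.

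Fix $c\in(0,2\pi\whM(Q))$. By Lemma~\ref{lem:left-attained} there are a mass $\bar c\in(0,c]$ and an optimizer $u_{\bar c}\in V(\bar c)$ of $m_{\bar c}$ with $m_{\bar c}=m_c$. We claim $\bar c=c$. If instead $\bar c<c$, then since $0<\bar c<c<2\pi\whM(Q)$, Theorem~\ref{thm:strict-monotonicity} gives $m_c<m_{\bar c}$, contradicting $m_{\bar c}=m_c$. Hence $\bar c=c$, so $u_{\bar c}\in V(c)$ and $E(u_{\bar c})=m_c$. Thus $m_c$ is attained.

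Since $m_c$ is attained, Lemma~\ref{lem:nonnegative-optimizer} with $b=c$ gives a nonnegative optimizer, namely $|u_{\bar c}|$. That lemma itself rules out the possibility $K(|u|)<0$ by using monotonicity in $c$.

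Essentially all the analytic work has already been done in Lemma~\ref{lem:left-attained} and Theorem~\ref{thm:strict-monotonicity}. The one point to check is that Lemma~\ref{lem:left-attained} really produces a weak limit lying in $V(\bar c)$, i.e.\ with $K=0$ and not merely $K\le0$. That lemma asserts this, citing the standard argument in \cite{Luo_inter}, and we take it as given here.
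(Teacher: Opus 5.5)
Your proposal is correct and follows the paper's proof essentially verbatim. Lemma~\ref{lem:left-attained} supplies an attained mass $\bar c\le c$ with $m_{\bar c}=m_c$, Theorem~\ref{thm:strict-monotonicity} rules out $\bar c<c$, and Lemma~\ref{lem:nonnegative-optimizer} then gives the nonnegative optimizer $|u|$.
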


\begin{proof}
By Lemma~\ref{lem:left-attained}, there exist $\bar c\in(0,c]$ and an optimizer $u_{\bar c}\in V(\bar c)$ of $m_{\bar c}$ such that
\[
        m_{\bar c}=m_c.
\]
If $\bar c<c$, then Theorem~\ref{thm:strict-monotonicity} gives
\[
        m_c<m_{\bar c},
\]
which contradicts $m_{\bar c}=m_c$.  Hence $\bar c=c$, and $u_{\bar c}$ is an optimizer of $m_c$.  Finally, Lemma~\ref{lem:nonnegative-optimizer} gives a nonnegative optimizer.
\end{proof}

\section{Optimizers characterized as standing wave solutions}\label{sec:standing-wave}

It remains to show that an optimizer is a standing wave solution for the focusing NLS on waveguide manifold, after applying the mass-preserving scaling in the $x$-variable.  The point is that the first-order variations above also encode the Euler--Lagrange equation.

\begin{lemma}[Projected tangent optimality]\label{lem:tangent-optimality}
Let $u\in V(c)$ be a nonnegative optimizer of $m_c$.  If
\[
        \mathcal M_u(h)=0,
\]
then
\[
        \mathcal B_u(h)=0.
\]
Consequently, there exists a real number $\ell<0$ such that
\begin{equation}\label{eq:calB-ell-calM}
        \mathcal B_u(h)=\ell\,\mathcal M_u(h)
        \qquad
        \text{for every }h\in C_c^\infty(\R^d\times\T).
\end{equation}
\end{lemma}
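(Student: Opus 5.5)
The plan is to argue by contradiction in the first claim, using the strict projected direction from Lemma~\ref{lem:strict-projected-direction-exists} to tilt a hypothetical bad tangent direction into a direction that lowers both the mass and the energy. That would contradict the monotonicity of Lemma~\ref{prop:jga-known}, exactly as in the proof of \eqref{eq:no-minus-minus}. The multiplier statement then follows from elementary linear algebra of two linear functionals with nested kernels.

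Fix $h$ with $\mathcal M_u(h)=0$ and suppose $\mathcal B_u(h)\neq0$. Replacing $h$ by $-h$ (both functionals are linear), we may assume $\mathcal B_u(h)<0$. Let $h_0$ be given by Lemma~\ref{lem:strict-projected-direction-exists}, so $\mathcal M_u(h_0)>0$ and $\mathcal B_u(h_0)<0$. For small $t>0$ set $g:=h-th_0\in C_c^\infty(\R^d\times\T)$. Then
\[
\mathcal M_u(g)=-t\,\mathcal M_u(h_0)<0,
\qquad
\mathcal B_u(g)=\mathcal B_u(h)-t\,\mathcal B_u(h_0)<0,
\]
where the second inequality holds once $t$ is small enough. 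Applying the projection construction of Section~\ref{sec:projected-variations} to $g$, the expansions \eqref{eq:projected-M}--\eqref{eq:projected-B} give, for small $\varepsilon>0$, a function $w_\varepsilon=\tau_\varepsilon(u+\varepsilon g)$ with $K(w_\varepsilon)=0$, $M(w_\varepsilon)<c$, and $E(w_\varepsilon)=\frac12B(w_\varepsilon)<\frac12B(u)=m_c$. Hence $m_{M(w_\varepsilon)}<m_c$. Since $M(w_\varepsilon)<c$, Lemma~\ref{prop:jga-known} gives $m_{M(w_\varepsilon)}\ge m_c$, a contradiction. Therefore $\mathcal B_u(h)=0$. (Strict monotonicity from Theorem~\ref{thm:strict-monotonicity} would also suffice here, but the non-strict version is enough.)

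For the multiplier, I would note that $\ker\mathcal M_u\subset\ker\mathcal B_u$ on the vector space $C_c^\infty(\R^d\times\T)$, and that $\mathcal M_u\not\equiv0$ by Lemma~\ref{lem:calM-nonzero}. Take $g:=h_0$, which has $\mathcal M_u(h_0)\neq0$, and define $\ell:=\mathcal B_u(h_0)/\mathcal M_u(h_0)$. For arbitrary $h$, the combination
\[
h-\frac{\mathcal M_u(h)}{\mathcal M_u(h_0)}\,h_0
\]
lies in $\ker\mathcal M_u$, hence in $\ker\mathcal B_u$. This yields \eqref{eq:calB-ell-calM}. The sign $\ell<0$ is then immediate, since $\mathcal B_u(h_0)<0<\mathcal M_u(h_0)$.

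The only delicate point is making sure the projection machinery applies to the combined direction $g$. This requires that the expansions \eqref{eq:projected-M}--\eqref{eq:projected-B} hold for every test function, which they do because the implicit-function construction only needs $A>0$. It also requires that the smallness in $t$ be chosen before the smallness in $\varepsilon$. Beyond that, the argument is a direct reuse of the earlier lemmas.
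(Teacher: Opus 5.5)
Your proof is correct, but it proves the kernel inclusion by a different mechanism than the paper.

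The paper stays at the fixed mass $c$. It perturbs along $u+\varepsilon h+\delta g$ with $\mathcal M_u(g)\neq0$ and projects onto $K=0$ by a two-parameter implicit function theorem. A second implicit function argument then picks $\delta(\varepsilon)$ so that $M=c$ again, and the paper shows $\delta(\varepsilon)=O(\varepsilon^2)$. This produces a competitor in $V(c)$ with $B<B(u)$, which contradicts minimality directly.

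You instead leave the mass level. Tilting $h$ by $-th_0$ gives a direction with $\mathcal M_u<0$ and $\mathcal B_u<0$. That is excluded by the one-parameter expansions together with the non-strict monotonicity of $c\mapsto m_c$, exactly as in \eqref{eq:no-minus-minus}. In effect you use that the image of $h\mapsto(\mathcal M_u(h),\mathcal B_u(h))$ is a linear subspace of $\R^2$ missing the open third quadrant. Once $\mathcal M_u\not\equiv0$, this subspace must be a line $\{(x,\ell x)\}$ with $\ell\le0$.

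What each route buys:
\begin{itemize}
\item Your route is shorter. It avoids the second implicit function argument and the quadratic estimate on $\delta(\varepsilon)$.
\item The paper's route uses only the minimality of $u$ at its own mass, not the monotonicity of $m_c$ imported from Lemma~\ref{prop:jga-known}. It is the standard Lagrange-multiplier argument and would still work where monotonicity of the level is unavailable.
\end{itemize}

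Your tilt does not actually need $h_0$. Any $g$ with $\mathcal M_u(g)<0$, which exists by Lemma~\ref{lem:calM-nonzero}, works the same way. So the strict projected direction is needed only for the sign $\ell<0$, exactly as in the paper. Your linear-algebra step and sign step coincide with the paper's.
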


\begin{proof}
We first prove the assertion on the kernel of $\mathcal M_u$.  Let $h$ be such that $\mathcal M_u(h)=0$.  Suppose, for contradiction, that $\mathcal B_u(h)\ne0$.  Replacing $h$ by $-h$ if necessary, we may assume
\[
        \mathcal B_u(h)<0.
\]
By Lemma~\ref{lem:calM-nonzero}, choose $g$ such that $\mathcal M_u(g)\ne0$.

For small parameters \(\varepsilon\) and \(\delta\), set
\[
        z_{\varepsilon,\delta}:=u+\varepsilon h+\delta g .
\]
We first project \(z_{\varepsilon,\delta}\) onto the constraint \(K=0\).
Define
\[
        \mathscr K(\tau,\varepsilon,\delta)
        :=
        K(\tau z_{\varepsilon,\delta}).
\]
Then
\[
        \mathscr K(1,0,0)=K(u)=0.
\]
Moreover,
\[
        \partial_\tau \mathscr K(1,0,0)
        =
        2A(u)-2_*\frac d{d+2}P(u).
\]
Since \(K(u)=0\), we have $A(u)=\frac d{d+2}P(u)$ and consequently
\[
        \partial_\tau \mathscr K(1,0,0)
        =
        (2-2_*)A(u)
        =
        -\frac4d A(u)\neq0.
\]
By the implicit function theorem, there exist a neighbourhood
\(\mathcal U\) of \((0,0)\) in \(\mathbb R^2\) and a unique \(C^1\) function
\(
        (\varepsilon,\delta)\mapsto \tau_{\varepsilon,\delta}
\)
defined on \(\mathcal U\) with
\(
        \tau_{0,0}=1,
\)
such that
\[
        K(\tau_{\varepsilon,\delta} z_{\varepsilon,\delta})=0.
\]
We define
\(
        W_{\varepsilon,\delta}
        :=
        \tau_{\varepsilon,\delta} z_{\varepsilon,\delta}.
\)
By definition,
\(
        K(W_{\varepsilon,\delta})=0
\)
for all \((\varepsilon,\delta)\in\mathcal U\).

The first-order expansion of \(\tau_{\varepsilon,\delta}\) follows by differentiating
\[
        \mathscr K(\tau_{\varepsilon,\delta},\varepsilon,\delta)=0
\]
at \((\varepsilon,\delta)=(0,0)\).  Since
\[
        \partial_\varepsilon \mathscr K(1,0,0)=K'(u)[h],
        \qquad
        \partial_\delta \mathscr K(1,0,0)=K'(u)[g],
\]
we obtain
\[
        \partial_\varepsilon\tau_{0,0}
        =
        \frac{d}{4A(u)}K'(u)[h],
        \qquad
        \partial_\delta\tau_{0,0}
        =
        \frac{d}{4A(u)}K'(u)[g].
\]
Consequently,
\[
        \tau_{\varepsilon,\delta}
        =
        1
        +
        \frac{d}{4A(u)}\bigl(\varepsilon K'(u)[h]+\delta K'(u)[g]\bigr)
        +
        O(\varepsilon^2+\delta^2+|\varepsilon\delta|).
\]
Using this expansion together with
\[
        M(z_{\varepsilon,\delta})
        =
        c
        +
        2\varepsilon\int uh
        +
        2\delta\int ug
        +
        O(\varepsilon^2+\delta^2+|\varepsilon\delta|),
\]
we get
\begin{equation}\label{eq:two-parameter-mass-expansion}
        M(W_{\varepsilon,\delta})
        =
        c
        +
        2\varepsilon\mathcal M_u(h)
        +
        2\delta\mathcal M_u(g)
        +
        O(\varepsilon^2+\delta^2+|\varepsilon\delta|).
\end{equation}
Similarly,
\begin{equation}\label{eq:two-parameter-B-expansion}
        B(W_{\varepsilon,\delta})
        =
        B(u)
        +
        2\varepsilon\mathcal B_u(h)
        +
        2\delta\mathcal B_u(g)
        +
        O(\varepsilon^2+\delta^2+|\varepsilon\delta|).
\end{equation}

We now impose the mass constraint.  Define
\[
        \Phi(\varepsilon,\delta)
        :=
        M(W_{\varepsilon,\delta})-c.
\]
Then
\(
        \Phi(0,0)=0.
\)
By \eqref{eq:two-parameter-mass-expansion},
\(
        \partial_\delta\Phi(0,0)
        =
        2\mathcal M_u(g).
\)
Since
\(
        \mathcal M_u(g)\neq0,
\)
the implicit function theorem applies once more and consequently, there exist
\(\varepsilon_1>0\) and a unique \(C^1\) function
\[
        \delta=\delta(\varepsilon),
        \qquad
        |\varepsilon|<\varepsilon_1,
\]
such that
\[
        \delta(0)=0,
        \qquad
        \Phi(\varepsilon,\delta(\varepsilon))=0
\]
and hence also
\(
        M(W_{\varepsilon,\delta(\varepsilon)})=c.
\)

It remains to justify the sharper estimate
\[
        \delta(\varepsilon)=O(\varepsilon^2).
\]
First, differentiating the identity
\(
        \Phi(\varepsilon,\delta(\varepsilon))=0
\)
at \(\varepsilon=0\) gives
\[
        0
        =
        \partial_\varepsilon\Phi(0,0)
        +
        \partial_\delta\Phi(0,0)\delta'(0).
\]
Since
\[
        \partial_\varepsilon\Phi(0,0)=2\mathcal M_u(h)=0
\]
and
\[
        \partial_\delta\Phi(0,0)=2\mathcal M_u(g)\neq0,
\]
we obtain
\(
        \delta'(0)=0.
\)
Combining Taylor expansion, this yields
\(
        \delta(\varepsilon)=o(\varepsilon).
\)
Now substitute \(\delta=\delta(\varepsilon)\) into
\eqref{eq:two-parameter-mass-expansion}.  Since
\(\mathcal M_u(h)=0\), we have
\[
        0
        =
        \Phi(\varepsilon,\delta(\varepsilon))
        =
        2\mathcal M_u(g)\delta(\varepsilon)
        +
        O\bigl(\varepsilon^2+\delta(\varepsilon)^2
        +|\varepsilon\delta(\varepsilon)|\bigr).
\]
Thus, for \(|\varepsilon|\) sufficiently small,
\[
        |\mathcal M_u(g)|\,|\delta(\varepsilon)|
        \le
        C\varepsilon^2
        +
        C\bigl(|\delta(\varepsilon)|+|\varepsilon|\bigr)
        |\delta(\varepsilon)|.
\]
Because \(\delta(\varepsilon)\to0\), we may reduce \(\varepsilon_1\) so that
\[
        C\bigl(|\delta(\varepsilon)|+|\varepsilon|\bigr)
        \le
        \frac12|\mathcal M_u(g)|
        \qquad
        \text{for }|\varepsilon|<\varepsilon_1.
\]
Absorbing the last term into the left-hand side yields
\[
        |\delta(\varepsilon)|
        \le
        C\varepsilon^2.
\]
Hence
\[
        \delta(\varepsilon)=O(\varepsilon^2).
\]

Finally, inserting this estimate into \eqref{eq:two-parameter-B-expansion}, we obtain
\[
        B(W_{\varepsilon,\delta(\varepsilon)})
        =
        B(u)
        +
        2\varepsilon\mathcal B_u(h)
        +
        O(\varepsilon^2).
\]
Since \(\mathcal B_u(h)<0\), it follows that
\[
        B(W_{\varepsilon,\delta(\varepsilon)})<B(u)
\]
for all sufficiently small \(\varepsilon>0\). Since $K(W_{\varepsilon,\delta(\varepsilon)})=0$ and $M(W_{\varepsilon,\delta(\varepsilon)})=c$, this function belongs to $V(c)$.  By \eqref{eq:energy-reduction}, it has energy strictly smaller than $E(u)=m_c$, a contradiction.  Thus $\mathcal B_u=0$ on $\ker\mathcal M_u$.

We next derive the proportionality relation \eqref{eq:calB-ell-calM}.  Recall first that both
\[
        h\mapsto \mathcal M_u(h),
        \qquad
        h\mapsto \mathcal B_u(h)
\]
are real linear functionals on the vector space
\(C_c^\infty(\mathbb R^d\times\mathbb T)\).  By
Lemma~\ref{lem:calM-nonzero}, the functional \(\mathcal M_u\) is not identically
zero.  Hence we may choose
\(
        h_*\in C_c^\infty(\mathbb R^d\times\mathbb T)
\)
such that
\(
        \mathcal M_u(h_*)\neq0.
\)
Define
\[
        \ell:=
        \frac{\mathcal B_u(h_*)}{\mathcal M_u(h_*)}.
\]
We claim that
\[
        \mathcal B_u(h)=\ell\,\mathcal M_u(h)
        \qquad
        \text{for every }h\in C_c^\infty(\mathbb R^d\times\mathbb T).
\]
Indeed, fix an arbitrary \(h\in C_c^\infty(\mathbb R^d\times\mathbb T)\), and set
\[
        \widetilde h
        :=
        h-\frac{\mathcal M_u(h)}{\mathcal M_u(h_*)}\,h_*.
\]
Then
\[
        \mathcal M_u(\widetilde h)
        =
        \mathcal M_u(h)
        -
        \frac{\mathcal M_u(h)}{\mathcal M_u(h_*)}
        \mathcal M_u(h_*)
        =
        0.
\]
Thus \(\widetilde h\in\ker\mathcal M_u\).  Since we have already proved that
\(\mathcal B_u=0\) on \(\ker\mathcal M_u\), it follows that
\(
        \mathcal B_u(\widetilde h)=0.
\)
Using the definition of \(\widetilde h\), we get
\[
        0
        =
        \mathcal B_u(h)
        -
        \frac{\mathcal M_u(h)}{\mathcal M_u(h_*)}
        \mathcal B_u(h_*).
\]
Therefore
\[
        \mathcal B_u(h)
        =
        \frac{\mathcal B_u(h_*)}{\mathcal M_u(h_*)}
        \mathcal M_u(h)
        =
        \ell\,\mathcal M_u(h).
\]
Since \(h\) was arbitrary, \eqref{eq:calB-ell-calM} holds.

It remains to show \(\ell<0\).  By Lemma~\ref{lem:strict-projected-direction-exists}, there exists
\(h_0\in C_c^\infty(\mathbb R^d\times\mathbb T)\) such that
\[
        \mathcal M_u(h_0)>0,
        \qquad
        \mathcal B_u(h_0)<0.
\]
Using \eqref{eq:calB-ell-calM}, we obtain
\[
        \ell
        =
        \frac{\mathcal B_u(h_0)}{\mathcal M_u(h_0)}
        <
        0.
\]
\end{proof}

\begin{theorem}[Optimizers as standing wave solution]\label{thm:standing-wave}
Let $0<c<2\pi\whM(Q)$, and let $u\in V(c)$ be a nonnegative optimizer of $m_c$.  Then there exists $\theta>0$ and $\omega>0$ such that
\begin{equation}\label{eq:anisotropic-positive}
        -\theta\Delta_xu-\partial_y^2u+\omega u=\theta u^{2_*-1}
\end{equation}
in the sense of distributions.  Consequently, the mass-preserving $x$-rescaling
\[
        U(x,y):=\theta^{d/4}u(\sqrt\theta x,y)
\]
is also an optimizer of $m_c$ and satisfies
\begin{equation}\label{eq:isotropic-positive}
        -\Delta_{x,y}U+\omega U=U^{2_*-1}
        \qquad\text{on }\R^d\times\T.
\end{equation}
In other words, $m_c$ possesses a normalized ground state solving the standing wave equation \eqref{eq:standing-wave-intro} with positive frequency $\omega>0$ .
\end{theorem}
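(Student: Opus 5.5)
The plan is to unpack the proportionality relation \eqref{eq:calB-ell-calM} from Lemma~\ref{lem:tangent-optimality} into a weak Euler--Lagrange equation, and then rescale in $x$. By that lemma there is $\ell<0$ with $\mathcal B_u(h)=\ell\,\mathcal M_u(h)$ for every $h\in C_c^\infty(\R^d\times\T)$. Writing both functionals out with \eqref{eq:K-prime} gives
\[
        \int \partial_yu\,\partial_yh
        +\frac{d(B-\ell c)}{2A}\Bigl(\int\nabla_xu\cdot\nabla_xh-\int u^{2_*-1}h\Bigr)
        -\ell\int uh=0 .
\]
Here $A=A(u)$ and $B=B(u)$. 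I would therefore set
\[
        \theta:=\frac{d(B-\ell c)}{2A},
        \qquad
        \omega:=-\ell .
\]
Since $\ell<0$ and $A,B,c>0$, both $\theta>0$ and $\omega>0$. The identity above is then exactly the distributional form of \eqref{eq:anisotropic-positive}. This step is routine bookkeeping; the substantive inputs (strict direction and the sign $\ell<0$) are already contained in Lemma~\ref{lem:tangent-optimality}, which itself rests on Lemma~\ref{lem:strict-projected-direction-exists}.

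For the rescaling I would take $U(x,y)=\theta^{d/4}u(\sqrt\theta x,y)$, which is the scaling \eqref{eq:x-scaling} with $t=\sqrt\theta$. By \eqref{eq:x-scaling-identities}, $U$ lies in $V(c)$ and $E(U)=E(u)=m_c$, so $U$ is again a nonnegative optimizer.

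To get the equation for $U$, I would compute in the weak formulation with $h(x,y)=\tilde h(x/\sqrt\theta,y)$, or directly for smooth enough $u$. One has
\[
        \Delta_xU=\theta^{d/4+1}(\Delta_xu)(\sqrt\theta x,y),
        \qquad
        U^{2_*-1}=\theta^{\frac d4(1+\frac4d)}u^{2_*-1}(\sqrt\theta\,\cdot,y)=\theta^{d/4+1}u^{2_*-1}(\sqrt\theta\,\cdot,y).
\]
Also $\partial_y^2U$ and $\omega U$ carry the factor $\theta^{d/4}$. Multiplying \eqref{eq:anisotropic-positive}, evaluated at $(\sqrt\theta x,y)$, by $\theta^{d/4}$ turns $-\theta\Delta_xu$ into $-\Delta_xU$ and $\theta u^{2_*-1}$ into $U^{2_*-1}$. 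This yields \eqref{eq:isotropic-positive} with the same $\omega>0$. The same exponent identity $\frac d2(2_*-1)=\frac d2+2$ was used in Lemma~\ref{lem:liouville}.

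The only point needing care is the change of variables in the weak formulation. Test functions transform bijectively under the dilation, so the distributional identity transfers. If desired, elliptic regularity upgrades it to a classical equation.
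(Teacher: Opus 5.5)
Your proposal is correct and follows the paper's proof essentially verbatim. You unpack $\mathcal B_u=\ell\,\mathcal M_u$ from Lemma~\ref{lem:tangent-optimality} with $\theta=d(B-\ell c)/(2A)$ and $\omega=-\ell$, then apply the mass-critical $x$-scaling with $t=\sqrt\theta$ and the exponent identity $\frac d4(2_*-1)=\frac d4+1$.
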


\begin{proof}
Let $A=A(u)$, $B=B(u)$, and let $\ell<0$ be the number from Lemma~\ref{lem:tangent-optimality}.  From
\[
        \mathcal B_u(h)-\ell\mathcal M_u(h)=0
\]
for every test function $h$, we get
\[
        \int\partial_yu\,\partial_yh
        -\ell\int uh
        +\frac{d(B-\ell c)}{4A}K'(u)[h]=0.
\]
Set
\[
        \theta:=\frac{d(B-\ell c)}{2A}.
\]
Since $B>0$, $c>0$, and $\ell<0$, we have $\theta>0$.  Using \eqref{eq:K-prime}, the preceding identity becomes
\[
        \int\partial_yu\,\partial_yh
        -\ell\int uh
        +\theta\int\nabla_xu\cdot\nabla_xh
        -\theta\int u^{2_*-1}h=0.
\]
Therefore
\[
        -\theta\Delta_xu-\partial_y^2u-
        \ell u=\theta u^{2_*-1}.
\]
Thus \eqref{eq:anisotropic-positive} holds with
\(
        \omega:=-\ell>0.
\)

Now define
\[
        U(x,y):=\theta^{d/4}u(\sqrt\theta x,y).
\]
The scaling is exactly the mass-critical $x$-scaling, hence it preserves $M$, $K=0$, and the value of $E$ on $V(c)$ by \eqref{eq:x-scaling-identities}.  Therefore $U$ is again an optimizer of $m_c$.

A direct computation using
\[
        \frac d4(2_*-1)=\frac d4+1
\]
shows that \eqref{eq:anisotropic-positive} transforms into
\[
        -\Delta_{x,y}U+\omega U=U^{2_*-1}
\]
which is exactly \eqref{eq:isotropic-positive}.
\end{proof}

\section{Conclusion}\label{sec:conclusion}
Summarizing, we give in this final section the proof of Theorem~\ref{thm:main}.

\begin{proof}[Proof of Theorem~\ref{thm:main}]
The existence of a nonnegative optimizer follows from Theorem~\ref{thm:existence}.  The positive-frequency standing wave equation follows from Theorem~\ref{thm:standing-wave}.
\end{proof}

\subsubsection*{Acknowledgements}
The author was supported by the NSF grant of Guangdong (No. 2024A1515010497), the QB-Program of Guangdong (No. 2024QN11X141) and the NSF grant of China (No. 12301301).

\subsubsection*{Data availability}
Data sharing is not applicable to this article as no datasets were generated or analysed during the current study.


\begin{thebibliography}{10}

\bibitem{R1T1Scattering}
{\sc Cheng, X., Guo, Z., and Zhao, Z.}
\newblock On scattering for the defocusing quintic nonlinear {S}chr\"{o}dinger
  equation on the two-dimensional cylinder.
\newblock {\em SIAM J. Math. Anal. 52}, 5 (2020), 4185--4237.

\bibitem{DancerSolution}
{\sc Dancer, E.~N.}
\newblock New solutions of equations on {{\(\mathbb{R}^n\)}}.
\newblock {\em Ann. Sc. Norm. Super. Pisa, Cl. Sci., IV. Ser. 30}, 3-4 (2001),
  535--563.

\bibitem{actionvsenergy}
{\sc Dovetta, S., Serra, E., and Tilli, P.}
\newblock Action versus energy ground states in nonlinear {Schr{\"o}dinger}
  equations.
\newblock {\em Math. Ann. 385}, 3-4 (2023), 1545--1576.

\bibitem{ForcellaLuoZhao}
{\sc Forcella, L., Luo, Y., and Zhao, Z.}
\newblock Solitons, scattering and blow-up for the nonlinear
  {{S}chr{\"o}dinger} equation with combined power-type nonlinearities on
  {$\mathbb{R}^d\times\mathbb{T}$}.
\newblock Preprint, {arXiv}:2409.15860 [math.{AP}] (2024), 2024.

\bibitem{Liouville_type}
{\sc Gidas, B., and Spruck, J.}
\newblock Global and local behavior of positive solutions of nonlinear elliptic
  equations.
\newblock {\em Comm. Pure Appl. Math. 34}, 4 (1981), 525--598.

\bibitem{LuoSIMA}
{\sc Hajaiej, H., Luo, Y., and Song, L.}
\newblock On existence and stability results for normalized ground states of
  mass-subcritical biharmonic nonlinear {Schr{\"o}dinger} equation on
  {{\(\mathbb{R}^{d} \times \mathbb{T}^{n}\)}}.
\newblock {\em SIAM J. Math. Anal. 56}, 4 (2024), 4415--4439.

\bibitem{HaniPausader}
{\sc Hani, Z., and Pausader, B.}
\newblock On scattering for the quintic defocusing nonlinear {S}chr\"{o}dinger
  equation on {$\Bbb R\times\Bbb T^2$}.
\newblock {\em Comm. Pure Appl. Math. 67}, 9 (2014), 1466--1542.

\bibitem{waveguide_ref_3}
{\sc Kengne, E., Vaillancourt, R., and Malomed, B.~A.}
\newblock Bose{\textendash}einstein condensates in optical lattices: the
  cubic{\textendash}quintic nonlinear {S}chrödinger equation with a periodic
  potential.
\newblock {\em Journal of Physics B: Atomic, Molecular and Optical Physics 41},
  20 (2008), 205202.

\bibitem{Kwong_uniqueness}
{\sc Kwong, M.~K.}
\newblock Uniqueness of positive solutions of {$\Delta u-u+u^p=0$} in {${\bf
  R}^n$}.
\newblock {\em Arch. Rational Mech. Anal. 105}, 3 (1989), 243--266.

\bibitem{Liouville2}
{\sc Li, Y., and Zhang, L.}
\newblock Liouville-type theorems and {H}arnack-type inequalities for
  semilinear elliptic equations.
\newblock {\em J. Anal. Math. 90\/} (2003), 27--87.

\bibitem{Luo_inter}
{\sc Luo, Y.}
\newblock Normalized ground states and threshold scattering for focusing {NLS}
  on $\mathbb{R}^d\times\mathbb{T}$ via semivirial-free geometry, 2022.

\bibitem{Luo_energy_crit}
{\sc Luo, Y.}
\newblock On long time behavior of the focusing energy-critical {NLS} on
  {{\(\mathbb{R}^d\times\mathbb{T}\)}} via semivirial-vanishing geometry.
\newblock {\em J. Math. Pures Appl. (9) 177\/} (2023), 415--454.

\bibitem{luo2021sharp}
{\sc Luo, Y.}
\newblock Sharp scattering for focusing intercritical {NLS} on high-dimensional
  waveguide manifolds.
\newblock {\em Math. Ann.\/} (May 2023).

\bibitem{Luo_LegendreFenchel}
{\sc Luo, Y.}
\newblock A {Legendre}-{Fenchel} identity for the nonlinear {Schr{\"o}dinger}
  equations on {{\(\mathbb{R}^d \times \mathbb{T}^m\)}}: theory and
  applications.
\newblock {\em J. Geom. Anal. 34}, 10 (2024), 40.
\newblock Id/No 313.

\bibitem{LuoCriticalScattering}
{\sc Luo, Y.}
\newblock Critical scattering for the nonlinear {{S}chr{\"o}dinger} equation on
  waveguide manifolds.
\newblock Preprint, {arXiv}:2506.00442 [math.{AP}] (2025), 2025.

\bibitem{luo2026dancer1}
{\sc Luo, Y.}
\newblock On dancer-type solutions for the {L}ane--{E}mden equation via
  semivirial-vanishing geometry, 2026.

\bibitem{waveguide_ref_1}
{\sc Schneider, T.}
\newblock {\em Nonlinear Optics in Telecommunications}.
\newblock Springer Science \& Business Media, Berlin Heidelberg, 2013.

\bibitem{waveguide_ref_2}
{\sc Snyder, A., and Love, J.}
\newblock {\em Optical Waveguide Theory}.
\newblock Springer Science \& Business Media, Berlin Heidelberg, 2012.

\bibitem{TTVproduct2014}
{\sc Terracini, S., Tzvetkov, N., and Visciglia, N.}
\newblock The nonlinear {S}chr\"{o}dinger equation ground states on product
  spaces.
\newblock {\em Anal. PDE 7}, 1 (2014), 73--96.

\bibitem{TNCommPDE}
{\sc Tzvetkov, N., and Visciglia, N.}
\newblock Small data scattering for the nonlinear {S}chr\"{o}dinger equation on
  product spaces.
\newblock {\em Comm. Partial Differential Equations 37}, 1 (2012), 125--135.

\bibitem{TzvetkovVisciglia2016}
{\sc Tzvetkov, N., and Visciglia, N.}
\newblock Well-posedness and scattering for nonlinear {S}chr\"{o}dinger
  equations on {$\Bbb{R}^d\times\Bbb{T}$} in the energy space.
\newblock {\em Rev. Mat. Iberoam. 32}, 4 (2016), 1163--1188.

\bibitem{weinstein}
{\sc Weinstein, M.~I.}
\newblock Nonlinear {S}chr\"{o}dinger equations and sharp interpolation
  estimates.
\newblock {\em Comm. Math. Phys. 87}, 4 (1982/83), 567--576.

\bibitem{RmT1}
{\sc Zhao, Z.}
\newblock On scattering for the defocusing nonlinear {S}chr\"{o}dinger equation
  on waveguide {$\Bbb R^m\times \Bbb T$} (when {$m = 2,3$}).
\newblock {\em J. Differential Equations 275\/} (2021), 598--637.

\bibitem{ZhaoZheng2021}
{\sc Zhao, Z., and Zheng, J.}
\newblock Long time dynamics for defocusing cubic nonlinear {S}chr\"{o}dinger
  equations on three dimensional product space.
\newblock {\em SIAM J. Math. Anal. 53}, 3 (2021), 3644--3660.

\end{thebibliography}

\end{document}